\newalphalph{\aalphalph}[mult]{\alphalph@alph}{26}
\newcommand{\alphalphval}[1]{\@ifundefined{c@#1}{
		\aalphalph{#1}
	}{\aalphalph{\value{#1}}}}
\newtheorem{thm}{Theorem}
\newtheorem{assump}{Assumption}
\newtheorem{proposition}{Proposition}
\newcommand{\ie} {{\em i.e.\/}, }
\newcommand{\eg} {{\em e.g.\/}, }
\newcommand{\jh}[1]{\textcolor{black}{#1}}
\newcommand{\jho}[1]{\textcolor{black}{#1}}
\newcommand{\jhw}[1]{\textcolor{black}{#1}}
\newcommand{\kz}[1]{\textcolor{black}{#1}}
\newcommand{\0}{\kern-1.2pt\vec{\kern1.2pt 0}}
\definecolor{darkblue}{RGB}{65,105,225}
\def\Box {\vrule height5pt width5pt depth0pt}
\def\beginproof{\par\noindent {\bf \em Proof:}\ \ }
\def\endproof{\hskip .5cm \Box \vskip .5cm}
\def\R{{\mathbb{R}}}
\def\E{{\mathbb{E}}}
\def\P{{\mathbb{P}}}
\def\D{{\mathcal D}}
\def\Qe{{\mathcal Q}}
\def\ld{\lambda}
\begin{document}
	\title{ {\bf A Study of Distributionally Robust Multistage Stochastic Optimization}\\[3mm] }	
	\author{\normalsize \bf Jianqiu Huang, Kezhuo Zhou, and Yongpei Guan
	\\ \\
	{\small Department of Industrial and Systems Engineering}\\
	{\small University of Florida, Gainesville, FL 32611, USA}\\
	{\small Emails: jianqiuhuang@ufl.edu, zhoukezhuo@ufl.edu, and guan@ise.ufl.edu}\\
	}
	\date{}
	
	\maketitle
	
	\vspace{-0.3in}

\begin{abstract}
	\setlength{\baselineskip}{18pt}
In this paper, we focus on a \kz{data-driven} risk-averse multistage stochastic programming (RMSP) model \kz{considering} distributional \kz{robustness}. We optimize the RMSP over the worst-case distribution within an ambiguity set \jho{of}  probability distributions \jho{constructed directly from} historical data samples. The proposed RMSP is intractable \kz{due to the multistage nested minimax structure \jhw{in} its objective function}, so we \kz{reformulate it into \jho{a deterministic} equivalent that contains} a \kz{series of} convex combination of expectation and \jho{conditional value at risk} (CVaR), \kz{which can be solved by a} \jho{customized} stochastic dual dynamic programming (SDDP) algorithm \kz{in this paper}. As the size of collected data samples increases to infinity, \kz{we show the consistency of the} RMSP with distributional \kz{robustness} to the traditional multistage stochastic programming. In addition, 
to test the computational performance \kz{of our proposed model and algorithm}, we \kz{conduct} numerical experiments for a risk-averse hydrothermal scheduling problem, the results of which demonstrate the \kz{effectiveness} of our RMSP \kz{framework}.
	
	\vspace{0.10in}
	
	\noindent{\it Key words:} multistage stochastic optimization; data-driven decision making; distributional \kz{robustness}; hydrothermal scheduling
\end{abstract}

\setlength{\baselineskip}{22pt}
\section{Introduction}\label{sec:introdutcion}
The multistage stochastic program (MSP) has been widely studied in literature for providing multi-period optimal decisions  under uncertainty,  \jho{since MSP} can be naturally adopted to model various real-life applications with periodical decisions, \eg hydrothermal scheduling \cite{pereira1991multi}, power system operations \cite{takriti1996stochastic,sen2006stochastic,pan2016strong}, transportation \cite{powell1995stochastic}, and supply chain planning \cite{santoso2005stochastic}.
For MSP models, the sequential decisions are made \kz{depending on the realization of} stochastic parameters, which are assumed to follow some known probability distributions. 
Once a decision has been made for the previous period, an observation of the stochastic parameters for the current period  becomes available and then the corresponding decision   will be made considering  future uncertainty. The objective of MSP is to minimize the total expected costs incurred by the decisions over the planning horizon subject to a series of constraints, \eg nonanticipativity constraints and other model\kz{ing} constraints. Readers are referred to \cite{kall1994stochastic}, \cite{shapiro2009lectures}, \cite{birge2011introduction}, and \cite{pflug2014multistage} for more detailed  MSP structures and properties.

\jh{MSP models are generally computationally intractable, since each decision making depends on not only the past parameter realization, but also decisions made in each previous stage}.
To simplify the model, the stochastic parameters are assumed to be discrete random variables and modeled via scenario trees, \jho{leading to a deterministic equivalent model of MSP, where the corresponding expectations in the objective function convert to finite sums}. 
Two types of decomposition algorithms are proposed to solve MSP, \ie scenario-based methods where the sample of realizations is fixed, and sampling-based methods where the sample of realizations is obtained iteratively.  Scenario-based methods use \jho{a} small set of realizations \jho{from the complete sample space} to approximately solve the original program, like diagonal quadratic approximation \cite{mulvey1992diagonal}, Lagrangian decomposition \cite{rosa1996augmented}, L-shaped methods \cite{van1969shaped,birge1985decomposition}, and scenario aggregation methods \cite{rockafellar1991scenarios}. On the other hand, sampling-based methods iteratively draw a subset of realizations from the complete sample space, where statistical bounds are utilized to create convergence criteria, \eg stochastic dual dynamic programming (SDDP) -- a Monte Carlo sampling-based method  \cite{pereira1991multi,shapiro2011analysis,chen1999convergent,donohue2006abridged,linowsky2005convergence,philpott2008convergence,kozmik2015evaluating}, 
the stochastic decomposition with extension to the multistage case \cite{higle2009stochastic,sen2014multistage},  and progressive hedging \cite{rockafellar1991scenarios,takriti1996stochastic}. \jho{These two types of methods are integrated together in \cite{rebennack2016combining}.}
\jhw{Moreover, multistage distances are introduced and utilized to solve MSP models approximately \cite{pflug2012distance,pflug2014multistage}.}

\jho{Over the last few decades}, risk-averse multistage stochastic programming (RMSP) has been attracting significant attentions, \jho{due to its advantages over MSP on modeling certain applications where \jho{low} probability events have high impact}.
The classical MSP focuses on the average behavior, however, there are many real-life applications where \jho{low} probability events have high impact. 
To handle this issue, various risk measures are designed to remedy \jho{the limitation} of the traditional expectation operator in the \jho{MSP objective function}, \ie RMSP uses a convex combination of a expectation and a risk measure \cite{rockafellar2000optimization,krokhmal2011modeling}. Seminal novel works  exploring the risk measure properties \jho{for RMSP include} coherent risk measures \cite{artzner1999coherent}, time-consistent risk measures \cite{ruszczynski2006conditional,ruszczynski2010risk}, and regularity of risk measures \cite{rockafellar2013fundamental}. Lagrangian relaxation is first proposed to solve the RMSP models \cite{eichhorn2005polyhedral,collado2012scenario}, and later \jh{come} the advanced nested L-shaped decomposition algorithm\kz{s} \cite{ahmed2006convexity,miller2011risk,philpott2013solving,guigues2014sddp,kozmik2015evaluating}.

Both MSP and RMSP  rely on  known probability distributions, which is generally not practical due to the ambiguity of the probability distributions in real-life applications. Hence, ambiguity sets of probability distributions are proposed to model all possible probability distributions \kz{within certain range} based on a series of historical data samples. 
Several types of ambiguity sets have been proposed, like moment-based  ambiguity sets \cite{shapiro2004class, delage2010distributionally,scarf1958min,vzavckova1966minimax,zymler2013distributionally, zymler2013worst, mehrotra2014models}, divergence-measure-based ambiguity sets \cite{klabjan2013robust,  calafiore2007ambiguous,ben2013robust,love2015phi}, and other metric-based ambiguity sets \cite{erdougan2006ambiguous,pflug2007ambiguity,mehrotra2014models,jiang2015risk}. 
Seminal works also construct ambiguity sets based on  the relation between risk measures and robust optimization models \cite{bertsimas2009constructing,natarajan2009constructing,ben2010soft}.

Our contribution in this paper is that  we first propose an RMSP formulation by integrating the general MSP model and the distributional ambiguity sets \kz{with  $ L_\infty $-norm}, which optimizes the total expected costs over the worst-case distribution within the ambiguity set. The ambiguity sets are constructed directly from the historical data sample\jhw{s}. We then derive \kz{an} equivalent reformulation of RMSP,  where the objective function is replaced by a convex combination of an expected cost and a \jho{conditional value at risk} (CVaR). A significant advantage of our reformulation is that it gets rid of the nested multistage minimax \kz{structure from} the original objective function, leading to a tractable MSP based on \kz{the} reference distributions \jho{constructed} \kz{from historical data}. We prove that   optimal solutions and \kz{objective} values of our RMSP \jho{with distributional robustness} converge to those of risk-netural MSP as the size of data samples increases to infinity. In addition, we customize and analyze the SDDP algorithm to solve \kz{our proposed} RMSP.  Furthermore, we apply RMSP with distributional \kz{robustness} to the hydrothermal scheduling problem, and implement corresponding computational experiments whose results verify the convergence of our method.

The following sections of this paper are organized as follows. In Section~\ref{sec:reformulation}, we present our RMSP model with distributional \kz{robustness} and reformulate it \jh{into} a tractable \kz{equivalent}, \kz{and show the consistency of} our RMSP \kz{model}. In Section~\ref{sec:sddp}, we customize the SDDP algorithm to solve our RMSP, followed by a convergence analysis of our SDDP algorithm. 
Next, we apply our RMSP to a risk-averse hydrothermal scheduling problem in Section~\ref{sec:compute}, and provide the corresponding computational experiment results. Finally we conclude our research in Section~\ref{sec:conclusion}.

\section{Risk-Averse Multistage Stochastic Program}\label{sec:reformulation}
In this section, we present the reformulation \kz{of} RMSP based on ambiguity sets \kz{with $ L_{\infty} $-norm} $ \D^t,  \forall\ t = 2,\dots,T $,  and then provide the \jho{corresponding} convergence analysis. 

\kz{At each stage $ t = 2, 3, \cdots, T $, we denote the sample space of the stochastic parameters $ \xi_t $ as $ \Omega_t = \{\xi_t^1, \xi_t^2, \cdots, \xi_t^R\}$}. Similarly to that of \cite{jiang2015risk}, the ambiguity set $ \D^t $ \kz{for possible probability distributions $ f = (f^1, f^2, \cdots, f^R) $} can be constructed in a data-driven way with \kz{two kinds of} representations as follows:
\begin{equation}
\mathcal{D}^t = \Bigg\{ f\geq 0: \kz{||f - f_0 ||_\infty \leq d_t},   \ \sum_{r=1}^{R} f^r=1	\Bigg\}, \label{formula1}
\end{equation}
or
\begin{equation}
\mathcal{D}^t = \Bigg\{ f\geq 0: \jho{d_1^t \leq ||f/f_0 ||_\infty \leq d_2^t ,}  \ \sum_{r=1}^{R} f^r=1	\Bigg\}, \label{formula2}
\end{equation}
\kz{where $ f_0 $ represents the reference distribution that can be established from the empirical distribution with historical data, \jh{$ d_t $ is a tolerance that decreases as the data size $ N_t $ increases}, \jho{and $ d_1^t/ d_2^t $ are tolerance parameters that increase/decrease to $ 1 $ as the data size $ N_t $ grows to infinity}. 
The decision rules of \jho{$ d_t$, $ d_1^t $, and $ d_2^t $} are beyond the scope of this paper and readers are referred to \cite{jiang2015risk} for more details in this part. 
\jho{It is obvious that representations \eqref{formula1} and \eqref{formula2} can be transformed to each other by selecting proper parameters $ d_t $, $ d_1^t $ and $ d_2^t $.}
\jho{Therefore}, we focus on the first approach to construct the distributional ambiguity set $ \mathcal{D}^t $ in this paper.}
\jh{We denote the lower and upper bound of the ambiguity set as $ f_{\ell} = f_0 -d_t $ and $f_{u} = f_0 +d_t $, respectively}. 
At each stage $ t =2,\dots,T $, we let $ \P_t^\ell$ and $ \P_t^{u-\ell} $ represent the probability measures induced by $ f_{\ell}$ and  $ f_u -f_{\ell} $, and we assume $ P_t^{\ell} := \sum_{r=1}^{R}f_{\ell}^r \in (0,1) $ and $ P_t^u := \sum_{r=1}^{R}f_{u}^r >1 $  to avoid trivial cases. 
In the following, we reformulate the worst-case expected cost over $ \D^t $ at each stage $ t $ \jh{into} a convex combination of an expected cost and a CVaR. Finally, we show that both the set of optimal solutions and the objective value of RMSP converge to \jho{those of} risk-neutral MSP as the data sample size grows to infinity, respectively.

\subsection{Equivalent Reformulation}\label{subsec:reformulate}
In this subsection, we develop the equivalent reformulation of the following nested RMSP formulation:  
\begin{equation}\label{formulation:origin}
\min_{A_1x_{1}\geq b_1} c_{1}x_{1} +  \sup_{\P_2 \in \mathcal{D}^2}\E_{\P_2}\Big[ \min_{A_2x_2 \geq b_2 - B_2x_1} c_2x_2 + \cdots + \sup_{\P_T \in \mathcal{D}^T}\E_{\P_T}[ \min_{A_Tx_T \geq b_T -B_Tx_{T-1}  } c_Tx_T  ]  \Big],
\end{equation}
where vectors $ c_t, b_t $, and matrices $ A_t, B_t $ are assumed to be stagewise independent random variables forming the stochastic data process \kz{$ (c_t(\xi_t), b_t(\xi_t), A_t(\xi_t), B_t(\xi_t)) $} for $ t=2,\dots, T $.

\begin{assump} \label{assump_bd}
	\kz{The RMSP has compact feasible set and relatively complete recourse. \jh{In addition, the} recourse function \jh{at each stage} is bounded for each decision $ x $.} 
\end{assump}
Due to the stagewise independence of the data process, formulation~\eqref{formulation:origin} can reformulated as a series of dynamic programming equations.
Starting from the last stage $ T $, we define $ Q_T(x_{T-1}(\xi_{T-1}), \xi_{T}   ) $ as the optimal value of the last stage program as follows:
\begin{align*}
Q_T(x_{T-1}(\xi_{T-1}), \xi_{T}   ) = \hspace{-0.5in} &&\min_{x_{T}} \quad & c_T(\xi_{T})x_T(\xi_{T}) \notag \\
\hspace{-0.5in} &&s.t. \quad & A_T(\xi_{T})x_T(\xi_{T}) \geq b_T(\xi_{T}) - B_T(\xi_{T})x_{T-1}(\xi_{T-1}).
\end{align*}

Backward to stage $ T-1 $, we have that $ Q_{T-1}(x_{T-2}(\xi_{T-2}), \xi_{T-1}   ) $ is equal to the optimal value of the program
\begin{align}\label{formulation:T}
\hspace{-0.16in}Q_{T-1}(x_{T-2}(\xi_{T-2}), \xi_{T-1}) = \hspace{-0.15in} &&\min_{x_{T-1}}\: & c_{T-1}(\xi_{T-1})x_{T-1}(\xi_{T-1}) +  \sup_{\P_T\in \mathcal{D}^T}\E_{\xi_T\sim \P_T}[Q_T( x_{T-1}(\xi_{T-1}), \xi_T )] \notag \\
\hspace{-0.1in}&& s.t.\: & A_{T-1}(\xi_{T-1})x_{T-1}(\xi_{T-1}) \geq b_{T-1}(\xi_{T-1}) - B_{T-1}(\xi_{T-1})x_{T-2}(\xi_{T-2}),
\end{align}
and the cost-to-go function $ \Qe_T( x_{T-1}(\xi_{T-1}) ) $ for the worst-case expectation \jh{$ \sup_{\P_T\in \mathcal{D}^T}\E_{\xi_T\sim \P_T}[ \allowbreak Q_T( x_{T-1}(\xi_{T-1}),\allowbreak \xi_T )] $} can be defined and then reformulated as follows:
\begin{align}\slabel{eqn:Qe_T}
&\Qe_T( x_{T-1}(\xi_{T-1}) )  \equiv  \sup_{\P_T\in \mathcal{D}^T}\E_{\xi_T\sim \P_T}[Q_T( x_{T-1}(\xi_{T-1}), \xi_T )] 	\notag	\\
&=  P_T^\ell\E_{\xi_T\sim \P_T^\ell}[Q_T(x_{T-1}(\xi_{T-1}), \xi_{T}   )] + (1-P_T^\ell)\mbox{CVaR}_{(P_T^u-1)/(P_T^u-P_T^\ell)}^{\xi_T\sim \P_T^{u-\ell}}[Q_T( x_{T-1}(\xi_{T-1}), \xi_T )]			\label{eqn:risk-measure}	\\
& = P_T^\ell\E_{\xi_T\sim \P_T^\ell}[Q_T(x_{T-1}(\xi_{T-1}), \xi_{T}   )] + \inf_{ u_{T-1} \in \R }  \Big\{  (1-P_T^\ell)u_{T-1}(\xi_{T-1}) \notag  \\
&+ (P_T^u-P_T^\ell)\E_{\xi_T\sim \P_T^{u-\ell}}[Q_T(x_{T-1}(\xi_{T-1}), \xi_{T}) - u_{T-1}(\xi_{T-1})]^+    \Big\}, \label{eqn:cvar-def}
\end{align}
where equality~\eqref{eqn:risk-measure} holds due to Theorem 6 in \cite{jiang2015risk} and equality~\eqref{eqn:cvar-def} holds because of the CVaR definition.

Thus, $ Q_{T-1}(x_{T-2}(\xi_{T-2}), \xi_{T-1}   ) $ can be reformulated by substituting \jh{Equation~\eqref{eqn:cvar-def}} into \eqref{formulation:T}.
\begin{align}
\min_{x_{T-1}, u_{T-1}} \quad & c_{T-1}(\xi_{T-1})x_{T-1}(\xi_{T-1}) + (1-P_T^\ell )u_{T-1}(\xi_{T-1}) + P_T^\ell \E_{\xi_T\sim \P_T^\ell }[Q_T(x_{T-1}(\xi_{T-1}), \xi_{T}   )] \notag \\
&+ (P_T^u-P_T^\ell )\E_{\xi_T\sim \P_T^{u-\ell}}[Q_T(x_{T-1}(\xi_{T-1}), \xi_{T}) - u_{T-1}(\xi_{T-1})]^+ \notag \\
s.t. \qquad & A_{T-1}(\xi_{T-1})x_{T-1}(\xi_{T-1}) \geq b_{T-1}(\xi_{T-1}) - B_{T-1}(\xi_{T-1})x_{T-2}(\xi_{T-2}).
\end{align}

By repeating this process backward, we can write dynamic programming equations for each stage $ t=2,\dots,T $ as
\begin{align}\label{formulation:t}
Q_t(x_{t-1}(\xi_{t-1}), \xi_t) =& \inf_{x_{t }, u_{t}} \Big\{  c_{t }(\xi_{t })x_{t }(\xi_{t }) + (1-P_{t+1}^\ell )u_{t}(\xi_t) + \Qe_{t+1}( x_{t }(\xi_{t }), u_{t}(\xi_{t }) ): \notag \\
& A_{t }(\xi_{t })x_{t }(\xi_{t }) \geq b_{t }(\xi_{t }) - B_{t }(\xi_{t })x_{t-1}(\xi_{t-1}) \Big\}, 
\end{align}
where the cost-to-go function $ \Qe_{t+1}( x_{t }(\xi_{t }), u_{t}(\xi_{t }) ) $ is defined as   
\begin{align}\label{eqn:cost-to-go}
\Qe_{t+1}( x_{t }(\xi_{t }), u_{t}(\xi_{t }) ) =& P_{t+1}^\ell \E_{\xi_{t+1}\sim \jh{\P_{t+1}^\ell} }[Q_{t+1}(x_{t }(\xi_{t }), \xi_{t+1}   )] \notag \\
&+ (P_{t+1}^u-P_{t+1}^\ell )\E_{\xi_{t+1}\sim \jh{\P_{t+1}^{u-\ell}}}[Q_{t+1}(x_{t }(\xi_{t }), \xi_{t+1}) - u_{t}(\xi_t) ]^+ ,
\end{align}
with $ \Qe_{T+1}(\cdot) = 0 $ and $ P_{T+1}^u = P_{T+1}^\ell  = 1 $.

Finally, the reformulated program at the first stage is described as follows:
\begin{align}\label{formulation:t=1}
\min_{x_{1}, u_{1}} \qquad & c_{1}x_{1}  +  (1-P_{2}^\ell )u_{1 } + \Qe_{2}( x_{1 }, u_{1} ) \notag \\
s.t. \qquad & A_{1} x_{1}  \geq b_{1}.
\end{align}

\subsection{Convergence Analysis}\label{subsec:converge}
In this section, we \jho{show the consistency of RMSP with distributional robustness by analyzing} the convergence property of \jh{RMSP} as the size of historical data samples increases to infinity. We find that when the size of historical data samples for constructing ambiguity sets at each stage goes to infinity, both the optimal objective value and the set of optimal solutions for \jh{RMSP} converge to the counterparts of MSP under true \kz{while unknown} distribution.
\kz{We define the following \jho{notations} for our proof.} We let $ z(0) $ denote the optimal objective value and $ U(0) $ denote the set of optimal solutions \kz{for the MSP under  \jho{the} true distribution. Similarly, we let $ \hat{z}(0) $ denote the optimal objective value and $ \hat{U}(0) $ denote the set of optimal solutions for the MSP under \jho{the} reference distribution. We extend the notations to RMSP by denoting $ \hat{z}(d_2(N_2),\ldots,d_T(N_T)) $ as the optimal objective value with data size $ N_t $ for each corresponding stage and $ \hat{U}(d_2(N_2),\ldots,d_T(N_T)) $ as the optimal solutions under the same setting.} 

For \jh{MSP}, the recourse functions are overlapped in a nest\kz{ed} structure and only in the last stage do we have a \jho{closed-form} recourse function for each scenario. Moreover, the number of decision variables in stage $ t $ grows in an exponential rate of stage $ t $. Thus, in order to better show our analysis of the convergence property, we denote $ x_t(\xi_t) $ \jh{as} the decision in stage $ t $ for \kz{observation} $ \xi_t $, $ x_t $ \jh{as} the set of decisions in stage $ t $ for all \kz{observations} in stage $ t $, and $ x $ \jh{as} the set of decisions for all stages and scenarios. \jho{Besides,} we let $ N=\min\{N_2,N_3,\ldots,N_T\}$ \jho{and} $ d=\max\{d_2,d_3,\ldots,d_T\} $. \jho{With the above notations,} we provide our conclusion on the convergence analysis for \jh{RMSP}.
\begin{proposition} \label{reflimit}
	As the size of data sample $ N $ goes to $ \infty $, $ \hat{z}(0)\rightarrow z(0) $. Furthermore, $ \hat{U}(0) $ converges to $ U(0) $, i.e., $ \lim_{N \rightarrow \infty}\sup_{x\in \hat{U}(0)}||x-U(0)||=0 $.
\end{proposition}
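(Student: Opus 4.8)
The plan is to reduce the risk-neutral MSP on the finite scenario tree to a single parametric convex program whose objective depends continuously on the stage probabilities, and then to combine a law-of-large-numbers statement with a classical stability result from parametric optimization. First I would record the probabilistic input. Since each stage space $\Omega_t=\{\xi_t^1,\dots,\xi_t^R\}$ is finite, the reference distribution $f_0$ is the empirical distribution built from $N_t$ i.i.d.\ samples, so by the strong law of large numbers each component of $f_0$ converges almost surely to the corresponding true probability as $N_t\to\infty$. Writing $p$ for the full vector of stage probabilities and $p^\star$ for the true one, the empirical vector $\hat p$ satisfies $\hat p\to p^\star$ almost surely as $N=\min_t N_t\to\infty$.

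Next I would pass to the deterministic equivalent. By stagewise independence the probability of a scenario path factorizes as a product of stage probabilities, so the risk-neutral MSP on the finite tree can be written as $z(p)=\min_{x\in X}\langle c(p),x\rangle$, where $X$ is the feasible set of the deterministic equivalent and $c(p)$ collects the node costs reweighted by path probabilities. The crucial observations are that the constraints $A_t(\xi_t)x_t\geq b_t(\xi_t)-B_t(\xi_t)x_{t-1}$ depend on the realizations but not on $p$, so $X$ is independent of $p$ and, by Assumption~\ref{assump_bd}, nonempty and compact; and that $c(p)$ is a polynomial, hence continuous, function of $p$. Consequently the map $(x,p)\mapsto\langle c(p),x\rangle$ is jointly continuous, and on the compact set $X$ the value function $z(\cdot)$ is continuous in $p$.

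From here I would read off the two conclusions. Continuity of $z$ at $p^\star$ together with $\hat p\to p^\star$ yields $\hat z(0)=z(\hat p)\to z(p^\star)=z(0)$, which is the first claim. For the solution sets I would invoke Berge's maximum theorem: because the constraint correspondence $p\mapsto X$ is constant, hence continuous and compact-valued, and the objective is jointly continuous, the argmin correspondence $p\mapsto\argmin_{x\in X}\langle c(p),x\rangle$ is upper hemicontinuous at $p^\star$. Upper hemicontinuity states precisely that, for every neighborhood of $U(0)$, all optimal solutions $\hat U(0)$ eventually lie inside it, i.e.\ $\sup_{x\in\hat U(0)}\mathrm{dist}(x,U(0))\to0$, which is the second claim.

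The main obstacle I anticipate is not the probabilistic convergence but the careful bookkeeping that justifies the deterministic-equivalent reduction: one must verify that the nested dynamic-programming recursion in Section~\ref{subsec:reformulate} collapses to a single convex program with a fixed compact feasible region and objective coefficients continuous in $p$, so that both the value-continuity and the upper-hemicontinuity hypotheses of Berge's theorem are genuinely met. In particular, relatively complete recourse and boundedness of the recourse functions (Assumption~\ref{assump_bd}) must be used to guarantee that $X$ is nonempty and compact, ruling out the empty or unbounded argmin that would otherwise break the stability argument.
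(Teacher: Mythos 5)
Your proof is correct, but it takes a genuinely different route from the paper's. The paper argues bare-handed: it first shows $\limsup_{N\to\infty}\hat{z}(0)\le z(0)$ by evaluating $\hat{h}(x^*,0)$ at a fixed optimal solution $x^*$ of the true-distribution MSP (using that, for fixed $x$, the objective is a polynomial in the stage probabilities and the reference distribution converges weakly to the truth), then rules out non-convergence of $\{\hat{z}_{\P_N}(0)\}$ by extracting convergent subsequences of optimal solutions from the compact feasible set and deriving the contradiction $z_1=z_2=z(0)$, and finally obtains the convergence of $\hat{U}(0)$ to $U(0)$ by the same subsequence-contradiction device. You instead collapse the nested recursion into a single parametric program $z(p)=\min_{x\in X}\langle c(p),x\rangle$ with a $p$-independent compact feasible set $X$ and coefficients polynomial in the stage-probability vector, and invoke joint continuity plus Berge's maximum theorem, so that continuity of the value and upper hemicontinuity of the $\argmin$ at $p^\star$, combined with $\hat{p}\to p^\star$ a.s.\ by the strong law, deliver both claims at once. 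The two arguments rest on the same structural facts (fixed compact feasible region; objective polynomial in the probabilities), but your packaging buys something concrete: Berge's hypotheses make rigorous exactly the step the paper glosses over, namely the interchange $\lim_k\hat{h}(\hat{x}^*_{N_k},0)=h(\bar{x})$ along sequences where the solution and the distribution vary simultaneously, which needs joint continuity (equivalently, uniform convergence of $\hat{h}(\cdot,0)$ to $h(\cdot)$ on the compact $X$), not merely the pointwise-in-$x$ convergence the paper cites; your explicit a.s.\ framing via the SLLN is likewise sharper than the paper's appeal to weak convergence. What the paper's elementary route buys in exchange is a self-contained template that it reuses for the $d_t>0$ case in its Theorem, where the objective involves nested suprema over the ambiguity sets; your approach extends there too (the inner suprema over the $L_\infty$-balls are jointly continuous in the solution, the probabilities, and the tolerances, so Berge applies with $(p,d)$ as the parameter), but that extension is not automatic and would need the check you sketch. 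One bookkeeping point you rightly flagged: when some empirical weights vanish, the deterministic equivalent still carries variables and constraints at unobserved nodes; relatively complete recourse ensures this neither destroys feasibility nor alters the value, and since upper hemicontinuity at $p^\star$ is indifferent to the enlarged, degenerate $\argmin$ at nearby $\hat{p}$, your conclusion survives intact.
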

\begin{proof}
	Along with the proof in this paper, we will use $ N\rightarrow \infty $ and $ d\rightarrow 0 $ interchangeably. Note that there exist underlying reference distributions along all stages for each $ \hat{z}(0) $, so we may use the notation $ \hat{z}_{\P_N}(0) $ when we need to emphasize it. We follow the same \kz{notation} rule for $ z(0) $ and throughout the proof in this paper. We also let $ \hat{h}(x,d_2,\ldots,d_T) $ represent the objective value \kz{for RMSP} \jho{corresponding to} solution $ x $ and tolerance\jho{s} $ d_2,\ldots,d_T $, and similarly $ h(x) $ \kz{for MSP with true distribution} under solution $ x $.
	\begin{equation}
	\jho{\hat{z}(0) = \hat{h}(\hat{x}^*,0) \leq \hat{h}(x^*,0)}, \label{eqn1}
	\end{equation}
	where $ \hat{x}^* $ represents an optimal solution to RMSP under corresponding tolerance, 
	\jho{and} $ x^* $ represents an optimal solution to \kz{MSP} under true distribution. 
	\jho{T}aking upper limit with respect to the size of historical data on both sides \jho{of Inequality~\eqref{eqn1},} we have
	\begin{eqnarray}
		&& \limsup_{N\rightarrow \infty} \hat{z}(0) = \limsup_{N\rightarrow \infty} \hat{h}(\hat{x}^*,0) \leq \limsup_{N\rightarrow \infty} \hat{h}(x^*,0) = \lim_{N\rightarrow \infty} \hat{h}(x^*,0) = h(x^*)=z(0), \slabel{limsup}
	\end{eqnarray}
\jho{where the second equality holds}
	because of the following reasons: \kz{once the decision variables $ x $ are fixed, the objective function is a polynomial of parameters $ f^r_t $; the reference distribution for each stage $ t $ converges weakly to the true distribution, so the limit of the objective value exists and is consistent with that under true distribution.}
	
	\jho{Next, we} show that $ \{\hat{z}_{\P_N}(0)\} $ converges. Under the \kz{Assumption \ref{assump_bd}} it is easy to see that $ \{\hat{z}_{\P_N}(0)\} $ belongs to a bounded set. So if $ \{\hat{z}_{\P_N}(0)\} $ does not converge, we can find two subsequences that converge to different value\kz{s}, say $ z_1 $ and $ z_2 $, i.e., $ \hat{z}_{\P_{N_t}}(0) \rightarrow z_1,\hat{z}_{\P_{N_s}}(0) \rightarrow z_2 $ and $ z_1 \neq z_2 $. Note that the corresponding optimal solutions  $ \{\hat{x}_{N_t}^*\} $ and $ \{\hat{x}_{N_s}^*\} $ are bounded due to the assumption of compact feasible region, there exists subsequences of the two \kz{series of} optimal solutions that converge respectively. For notation brevity\jho{,} we still denote the two subsequences as  $ \{\hat{x}_{N_t}^*\} $ and $ \{\hat{x}_{N_s}^*\} $, and $ \hat{x}_{N_t}^* \rightarrow x_1, \hat{x}_{N_s}^*\rightarrow x_2 $, where $ x_1 $ and $ x_2 $ are two feasible solution. Then we have
	\begin{subeqnarray}
		&& z_1 = \lim_{t\rightarrow \infty} \hat{z}_{\P_{N_t}}(0) = \lim_{t\rightarrow \infty}\hat{h}(\hat{x}_{N_t}^*,0) = \lim_{t\rightarrow \infty}\hat{h}(x_1,0) = h(x_1) \geq z(0)\jh{,} \nonumber\\
		&& z_2 = \lim_{s\rightarrow \infty} \hat{z}_{\P_{N_s}}(0) = \lim_{s\rightarrow \infty}\hat{h}(\hat{x}_{N_s}^*,0) = \lim_{s\rightarrow \infty}\hat{h}(x_2,0) = h(x_2) \geq z(0)\jh{.} \nonumber
	\end{subeqnarray}

	But we have $ z_1 \leq z(0) $ and  $ z_2 \leq z(0) $ due to \eqref{limsup}, so $ z_1 = z(0) =z_2 $, which is a contradiction. Thus, $ \{\hat{z}_{\P_N}(0)\} $ converges and we have
	\begin{eqnarray}
		&& \lim_{N\rightarrow \infty} \hat{z}(0) = \limsup_{N\rightarrow \infty} \hat{z}(0) = z(0). \nonumber
	\end{eqnarray}
	
	\jho{Finally}, we prove the convergence property of $ \hat{U}(0) $ to $ U(0) $ by contradiction. \kz{Supposing} that $ \sup_{x\in \hat{U}(0)}||x-U(0)|| $ does not converge to zero as $ N $ grows to infinity, there exists a positive number $ \epsilon_0 $ and a sequence of optimal solutions $ \{\hat{x}_{N_k}^*\} $ such that $ ||\hat{x}_{N_k}^*-U(0)|| > \epsilon_0 $ for all $ k $. Following the same idea \jho{as} above, we have a subsequence of the optimal solutions $ \{\hat{x}_{N_k}^*\} $ that converges. For notation brevity, we still denote the subsequence as  $ \{\hat{x}_{N_k}^*\} $ and  $ \hat{x}_{N_k}^* \rightarrow \bar{x} $, where $ \bar{x} $ is a feasible solution. 
	Then \jho{we have} $ \lim_{k\rightarrow \infty}\hat{h}(\hat{x}_{N_k}^*,0) = \lim_{k\rightarrow \infty}\hat{h}(\bar{x},0) = h(\bar{x}) $.
	Since $ \{\hat{x}_{N_k}^*\} $ is a sequence of optimal solutions\jho{,} we conclude that $ h(\bar{x}) = z(0) $ and accordingly $ \bar{x} \in U(0)$. However, as $ \hat{x}_{N_k}^* \rightarrow \bar{x} $ and $ ||\hat{x}_{N_k}^*-U(0)|| > \epsilon_0 $, we have $ ||\bar{x}-U(0)|| \geq \epsilon_0 > 0$, which is a contradiction. This completes the proof.
\end{proof}

\begin{thm}
	For all $ t\geq 2 $, as the size of historical data samples $ N_t $ increases to $ \infty $, the distance tolerance $ d_t(N_t) \rightarrow 0$, \kz{$ \max_{\xi_t \in \Omega_t} |f^t(\xi_t)-f_0^t(\xi_t)|\rightarrow 0$}, and $ \hat{z}(d_2(N_2),\ldots,d_T(N_T))\rightarrow z(0) $. Furthermore, $ \hat{U}(d_2(N_2),\ldots,d_T(N_T)) $ converges to $ U(0) $, i.e., $ \lim_{N_2,\ldots,N_T \rightarrow \infty}\sup_{x\in U(d_2(N_2),\ldots,d_T(N_T))}||x-U(0)||=0 $.
\end{thm}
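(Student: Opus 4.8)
The plan is to reduce this theorem to Proposition~\ref{reflimit} together with a perturbation argument that absorbs the nonzero tolerances $d_t$. I first dispose of the two pointwise limits. The claim $d_t(N_t)\to 0$ is exactly the decision-rule assumption on the tolerance recorded after \eqref{formula2}, while $\max_{\xi_t\in\Omega_t}|f^t(\xi_t)-f_0^t(\xi_t)|\to 0$ is immediate from the construction of $\mathcal{D}^t$ in \eqref{formula1}: every distribution in the ambiguity set, and in particular the worst-case distribution attaining the supremum, satisfies $\|f-f_0\|_\infty\le d_t$, whose right-hand side vanishes with $N_t$. Combining this with the triangle inequality shows that the worst-case distribution also converges to the true distribution, since $\|f^t-f_{\mathrm{true}}^t\|_\infty\le d_t+\|f_0^t-f_{\mathrm{true}}^t\|_\infty$ and the second term already tends to zero by the weak convergence of the empirical reference distribution exploited in Proposition~\ref{reflimit}.

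The core step is to show that the RMSP objective converges to the risk-neutral MSP objective. Writing $\hat h(x,d_2,\dots,d_T)$ for the RMSP value at $x$ and $h(x)$ for the true-distribution MSP value as in the proof of Proposition~\ref{reflimit}, I would prove $\hat h(x,d_2,\dots,d_T)\to h(x)$ by backward induction on the recursion \eqref{eqn:cost-to-go}. The key observation is that as $d_{t+1}\to 0$ the CVaR term in \eqref{eqn:cost-to-go} carries weight $P_{t+1}^u-P_{t+1}^\ell=2Rd_{t+1}\to 0$, whereas the expectation term $P_{t+1}^\ell\,\E_{\xi_{t+1}\sim\P_{t+1}^\ell}[\cdot]$ reduces to the sum against $f_\ell=f_0-d_{t+1}\to f_0$ and hence tends to the reference expectation $\E_{f_0}[\cdot]$. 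Under Assumption~\ref{assump_bd} the recourse functions $Q_{t+1}$ are bounded, so the risk term vanishes uniformly and $\Qe_{t+1}$ collapses to $\E_{f_0}[Q_{t+1}]$, which in turn converges to the true expectation as $f_0\to f_{\mathrm{true}}$. Propagating this stage by stage, and using relatively complete recourse and compactness to keep each cost-to-go function finite and continuous in the state, yields $\hat h(x,d_2,\dots,d_T)\to h(x)$, uniformly on the compact feasible set.

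With this convergence in hand, the remainder of the argument mirrors the proof of Proposition~\ref{reflimit} verbatim, with $\hat h(\cdot,0)$ replaced by $\hat h(\cdot,d_2,\dots,d_T)$. For the upper bound, optimality gives $\hat z(d_2,\dots,d_T)=\hat h(\hat x^*,d_2,\dots,d_T)\le \hat h(x^*,d_2,\dots,d_T)$, and taking the upper limit together with the pointwise convergence above yields $\limsup \hat z(d_2,\dots,d_T)\le h(x^*)=z(0)$. Convergence of $\{\hat z(d_2,\dots,d_T)\}$ then follows from the same two-subsequence contradiction: boundedness from Assumption~\ref{assump_bd} and compactness let me extract convergent optimizers $\hat x^*\to x_1,x_2$, joint continuity gives $z_i=h(x_i)\ge z(0)$, and the limsup bound forces $z_1=z_2=z(0)$. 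The solution convergence $\hat U(d_2,\dots,d_T)\to U(0)$ is the identical contradiction argument: any RMSP optimizer sequence bounded away from $U(0)$ by $\epsilon_0$ has a convergent subsequence $\hat x^*\to\bar x$ with $h(\bar x)=z(0)$, hence $\bar x\in U(0)$, contradicting $\|\bar x-U(0)\|\ge\epsilon_0$.

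The main obstacle is the joint continuity underlying the steps $\hat h(\hat x^*,d_2,\dots,d_T)\to h(x_i)$, where the decision and the distributions move simultaneously. Because of the nested structure, the worst-case distribution at each stage is coupled to the downstream cost-to-go functions, so the pointwise limit $\hat h(x,d)\to h(x)$ must be upgraded to uniform convergence on the compact feasible set before it can be interchanged with the limit $\hat x^*\to\bar x$; I would obtain this uniformity from the convexity of the recourse functions in the state together with the boundedness and compactness in Assumption~\ref{assump_bd}, which supply the equicontinuity that passes pointwise convergence to uniform. A secondary subtlety is that, as the risk-term weight $P_{t+1}^u-P_{t+1}^\ell$ vanishes, the auxiliary threshold variables $u_t$ become unidentified in the limit, so the solution-convergence statement should be read for the genuine stage decisions $x_t$, with the $u_t$ treated as auxiliary.
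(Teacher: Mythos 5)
Your proposal is correct, but it reaches the lower bound $\liminf \hat{z}(d_2,\ldots,d_T)\geq z(0)$ by a genuinely different route than the paper. The paper's argument is a monotonicity sandwich: since the reference distribution $f_0$ lies in every ambiguity set $\mathcal{D}^t$ with $d_t\geq 0$, one has $\hat{h}(x,d_2,\ldots,d_T)\geq \hat{h}(x,0)$ pointwise, so $\hat{z}(d_2,\ldots,d_T)\geq \hat{z}(0)$, and the lower bound follows in one line from Proposition~\ref{reflimit} (this is exactly steps \eqref{lowlim2}--\eqref{lowlim5}); no limit interchange between the moving optimizers $\hat{x}^*_{N,d}$ and the moving objectives is ever needed for the value convergence. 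You instead derive the lower bound from subsequential compactness plus uniform convergence of $\hat{h}(\cdot,d_2,\ldots,d_T)$ to $h(\cdot)$, which is where all of your technical weight sits --- and, to your credit, you correctly identify that interchanging $\hat{x}^*\rightarrow\bar{x}$ with $d\rightarrow 0$ requires upgrading pointwise to uniform convergence, a step the paper itself performs silently both here and inside Proposition~\ref{reflimit} (e.g., $\lim_t \hat{h}(\hat{x}^*_{N_t},0)=\lim_t\hat{h}(x_1,0)$); your convexity-based equicontinuity argument is a sound way to discharge it on the compact feasible set of Assumption~\ref{assump_bd}, up to the usual care at the boundary. Your upper bound coincides with the paper's: fixing a feasible policy and letting the tolerances vanish is precisely inequality \eqref{uppineqn} followed by the iterated stage-by-stage limits \eqref{upplim3}--\eqref{upplim7}, and your quantitative observation that the risk term carries weight $P^u_{t+1}-P^\ell_{t+1}=2Rd_{t+1}\rightarrow 0$ while $f_\ell=f_0-d_{t+1}\rightarrow f_0$ makes transparent what the paper justifies by linearity of the objective in $f^r$ and weak convergence. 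In short: the paper's proof buys brevity by routing the lower bound through Proposition~\ref{reflimit} via monotonicity in $d$, whereas yours buys self-containedness and explicitly repairs the uniform-convergence gap; your closing remark that the auxiliary thresholds $u_t$ become unidentified as the CVaR weight vanishes, so that solution convergence should be read in the genuine decisions $x_t$, is a legitimate caveat the paper does not address.
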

\begin{proof}
	To prove that $ \lim_{N\rightarrow \infty}\hat{z}(d_2(N_2),\ldots,d_T(N_T)) = z(0) $, it is enough to show that
	\begin{eqnarray}
		\limsup_{N\rightarrow \infty}\hat{z}(d_2(N_2),\ldots,d_T(N_T)) \leq z(0), \slabel{uppgoal}
	\end{eqnarray}
	and
	\begin{eqnarray}
		\liminf_{N\rightarrow \infty}\hat{z}(d_2(N_2),\ldots,d_T(N_T)) \geq z(0).  \slabel{lowgoal}
	\end{eqnarray}
	First, since $ \hat{h}(\hat{x}_{N,d}^*,d_2(N_2),\ldots,d_T(N_T)) \geq \hat{h}(\hat{x}_{N,d}^*,0) \geq \hat{h}(\hat{x}_{N,0}^*,0) $, where $ \hat{x}_{N,d}^* $ and $ \hat{x}_{N,0}^* $ represent the optimal solution to corresponding \jh{RMSP} under tolerance $ (d_2(N_2),\ldots,d_T(N_T)) $ and $ 0 $ respectively, by taking lower limit, we have
	\begin{subeqnarray}
		&& \liminf_{N\rightarrow \infty}\hat{z}(d_2(N_2),\ldots,d_T(N_T)) \nonumber \\
		& = &\liminf_{N\rightarrow \infty}\hat{h}(\hat{x}_{N,d}^*,d_2(N_2),\ldots,d_T(N_T)) \slabel{lowlim1}\\
		& \geq & \liminf_{N\rightarrow \infty} \hat{h}(\hat{x}_{N,0}^*,0) \slabel{lowlim2}\\
		& = & \lim_{N\rightarrow \infty} \hat{h}(\hat{x}_{N,0}^*,0) \slabel{lowlim3}\\
		& = & \lim_{N\rightarrow \infty} \hat{z}(0)\slabel{lowlim4} \\
		& = & z(0), \slabel{lowlim5}
	\end{subeqnarray}
	where \eqref{lowlim1} holds by definition, \eqref{lowlim2} holds because of the inequality we just provide, \jho{and} \eqref{lowlim3} to \eqref{lowlim5} hold because of Proposition \ref{reflimit}. Thus, inequality \eqref{lowgoal} is proved.
	
	Second, we have $ \hat{h}(\hat{x}_{N,d}^*,d_2(N_2),\ldots,d_T(N_T)) \leq  \hat{h}(x,d_2(N_2),\ldots,d_T(N_T)) $ for any feasible solution $ x $. More specifically, it can be written briefly as 
	\begin{align} \label{uppineqn}
		  & \min \; c_{1}x_{1} +  \sup_{\P_2 \in \mathcal{D}\kz{^2}}\E_{\P_2}\Big[ \min c_2 x_2 + \cdots + \sup_{\P_T \in \mathcal{D}\kz{^T}}\E_{\P_T}[ \min  c_T x_T ]  \Big] \notag \\
		 \leq \quad & c_{1}x_{1} +  \sup_{\P_2 \in \mathcal{D}\kz{^2}}\E_{\P_2}\Big[c_2 x_2 + \cdots + \sup_{\P_T \in \mathcal{D}\kz{^T}}\E_{\P_T}[  c_T x_T ]  \Big] 
	\end{align}
	for all feasible $ x $. Taking upper limit on both sides of inequality \eqref{uppineqn}, we can obtain that 
	\begin{subeqnarray} \label{uppineqn2}
		&& \limsup_{N\rightarrow \infty}\hat{z}(d_2(N_2),\ldots,d_T(N_T)) \nonumber \\
		& = & \limsup_{N\rightarrow \infty}\hat{h}(\hat{x}_{N,d}^*,d_2(N_2),\ldots,d_T(N_T)) \slabel{upplim1} \\
		& \leq & \limsup_{N\rightarrow \infty} \Big\{ c_{1}x_{1} +  \sup_{\P_2 \in \mathcal{D}\kz{^2}}\E_{\P_2}\Big[c_2 x_2 + \cdots + \sup_{\P_T \in \mathcal{D}\kz{^T}}\E_{\P_T}[  c_T x_T ]  \Big] \Big \} \slabel{upplim2} \\
		& = & \limsup_{d_T\rightarrow 0} \cdots \limsup_{d_2\rightarrow 0}\Big\{ c_{1}x_{1} +  \sup_{\P_2 \in \mathcal{D}\kz{^2}}\E_{\P_2}\Big[c_2 x_2 + \cdots + \sup_{\P_T \in \mathcal{D}\kz{^T}}\E_{\P_T}[  c_T x_T ]  \Big] \Big \}  \slabel{upplim3} \\
		& = & \limsup_{d_T\rightarrow 0} \cdots \limsup_{d_3\rightarrow 0} \Big\{ c_{1}x_{1} + \limsup_{d_2\rightarrow 0} \sup_{\P_2 \in \mathcal{D}\kz{^2}}\E_{\P_2}\Big[c_2 x_2 + \cdots + \sup_{\P_T \in \mathcal{D}\kz{^T}}\E_{\P_T}[  c_T x_T ]  \Big] \Big \} \slabel{upplim4} \\
		& = & \limsup_{d_T\rightarrow 0} \cdots \limsup_{d_3\rightarrow 0} \Big\{ c_{1}x_{1} + \E_{\P_2}\Big[c_2 x_2 + \cdots + \sup_{\P_T \in \mathcal{D}\kz{^T}}\E_{\P_T}[  c_T x_T ] \Big] \Big \} \slabel{upplim5} \\
		& = & \cdots \slabel{upplim6} \\
		& = &  c_{1}x_{1} +  \E_{\P_2}\Big[c_2 x_2 + \cdots + \E_{\P_T}[  c_T x_T ]  \Big], \slabel{upplim7}
	\end{subeqnarray}
	where \eqref{upplim1} holds by definition, \eqref{upplim2} holds because of inequality \eqref{uppineqn}, \eqref{upplim3} holds because $ N\rightarrow \infty $ is equivalent to $ d_2(N_2),\ldots,d_T(N_T) \rightarrow 0  $, \eqref{upplim4} holds because here we only consider the limit when $ d_2 $ goes to zero, \eqref{upplim5} holds because for stage $ 2 $, the \kz{corresponding} empirical distribution converges weakly to the true distribution and the recourse function for stage $ 2 $ is actually a linear expression of $ f_2^r $, \kz{which leads to both the existence of the limit and the consistency},  and \eqref{upplim6} to \eqref{upplim7} holds \kz{\jho{by taking} the limit for stage $ 3,\ldots, T $}. Since inequality \eqref{uppineqn2} holds for any feasible solution $ x $, we conclude that 
	\begin{subeqnarray*} 
		&& \limsup_{N\rightarrow \infty}\hat{z}(d_2(N_2),\ldots,d_T(N_T)) \nonumber \\
		& \leq &  \min c_{1}x_{1} +  \E_{\P_2}\Big[ \min c_2 x_2 + \cdots + \E_{\P_T}[ \min  c_T x_T ]  \Big] \\
		& = & z(0).
	\end{subeqnarray*}
	Thus, inequality \eqref{uppgoal} is proved and $ \lim_{N\rightarrow \infty}\hat{z}(d_2(N_2),\ldots,d_T(N_T)) = z(0) $.
	
	The proof for convergence property of optimal solutions follows similar method to that in Proposition \ref{reflimit} and is \kz{thus} omitted here.
\end{proof}

\section{Solution Approach}\label{sec:sddp}
In this section, we first \jho{utilize} a scenario tree  to model the data process $  \xi_2, \dots,\xi_T $ \jho{based on the historical data}, and then \jho{customize} the SDDP approach to solve our RMSP.

We collect the historical data and then generate a finite scenario tree based on these data with two reference distributions $ \P^\ell$ and $ \P^{u-\ell} $. At each stage $  t=2,\dots, T $,  we denote  $ N_t $ as the total number of different scenarios, and let $ p_{ti}^\ell $ \jho{and} $ p_{ti}^{u-\ell} $ be the corresponding reference probabilities for scenario $ \xi_{ti}, i=1,\dots,N_t $.
Therefore, we can rewrite the dynamic programming equations in \eqref{formulation:t} as follows:
\begin{align}\label{formulation:t-data-driven}
\hat{Q}_{ti}(\bar{x}_{t-1}) = \inf_{x_{t }, u_{t}} \Big\{ & c_{t }(\xi_{ti})x_{t }(\xi_{ti}) + (1-P_{t+1}^\ell )u_{t}(\xi_{ti}) + \hat{\Qe}_{t+1}( x_{t }(\xi_{ti}), u_{t}(\xi_{ti}) ): \notag \\
& A_{t}(\xi_{ti})x_{t }(\xi_{ti}) \geq b_{t }(\xi_{ti}) - B_{t }(\xi_{ti})\bar{x}_{t-1} \Big\}, 
\end{align}
for $ i=1,\dots, N_t $, where \jhw{$ \bar{x}_{t-1} $ is the current solution obtained from stage $ t-1 $, and} the corresponding cost-to-go \jho{function} can be written as follows:
\begin{equation}\label{eqn:cost-to-go-data-driven}
\hat{\Qe}_{t+1}( x_{t }(\xi_{tj }), u_{t}(\xi_{tj }) ) = P_{t+1}^\ell \sum_{i=1}^{N_{t+1}}p_{t+1i}^\ell  [\hat{Q}_{t+1i}(x_{t }(\xi_{tj }))] + (P_{t+1}^u-P_{t+1}^\ell )\sum_{i=1}^{N_{t+1}}p_{t+1i}^{u-\ell}[\hat{Q}_{t+1i}(x_{t }(\xi_{tj })) - u_{t}(\xi_{tj}) ]^+.
\end{equation}

At the first stage, \jho{we have}
\begin{align}\label{formulation:t=1-data-driven}
\min_{x_{1}, u_{1}} \qquad & c_{1}x_{1}  +  (1-P_{2}^\ell )u_{1 } + \hat{\Qe}_{2}( x_{1 }, u_{1} ) \notag \\
s.t. \qquad & A_{1} x_{1}  \geq b_{1}.
\end{align}

\jhw{In the following, we use variables $ x_t, u_t $ instead of $ x_{t }(\xi_{tj }), u_{t}(\xi_{tj }) $ for notation brevity, and we denote $ \bar{x}_{t }, \bar{u}_{t} $ as the current solutions for stage $ t $.}
\jho{As} the cost-to-go functions $ \hat{\Qe}_{t+1}( x_{t }, u_{t} ) $ are convex,   we can use supporting hyperplanes to  make lower approximations of the cost-to-go functions. We define $ \mathfrak{Q}_{t+1}(x_{t }, u_{t}) $ as the current approximation of the cost-to-go function at stage $ t+1$ \jho{for each} $ t= 1,\dots,T-1 $. 

To generate a supporting hyperplane for $ \hat{\Qe}_{t+1}( x_{t }, u_{t} ) $ at $ ( \bar{x}_{t }, \bar{u}_{t} ) $ in \eqref{eqn:cost-to-go-data-driven}, we first consider the subdifferential of function $ [\hat{Q}_{t+1i}(x_{t }) - u_{t}]^+ $ at point $(  \bar{x}_{t }, \bar{u}_{t} ) $, which is 
	$$ \partial [\hat{Q}_{t+1i}(x_{t }) - u_{t}]^+  = \left\{\begin{array}{lr}
	[0,0]  & \text{if } \hat{Q}_{t+1i}(\bar{x}_{t }) < \bar{u}_{t},  \\
	\bigcup\limits_{g\in \partial \hat{Q}_{t+1i}(\bar{x}_{t }) } [g,-1]  & \text{if }   \hat{Q}_{t+1i}(\bar{x}_{t }) > \bar{u}_{t} , \\
	\bigcup\limits_{g\in \partial \hat{Q}_{t+1i}(\bar{x}_{t }) , \ld\in [0,1] }[\ld g, -\ld]  & \text{if } \hat{Q}_{t+1i}(\bar{x}_{t }) = \bar{u}_{t}.
	\end{array}\right.
	$$

Applying the chain rule of subdifferentials, the subgradient of $ \hat{\Qe}_{t+1}( x_{t }, u_{t} ) $ at $ ( \bar{x}_{t }, \bar{u}_{t} ) $ is 
	\begin{equation}\label{eqn:subgradient}
\Bigg[ P_{t+1}^\ell \sum_{i=1}^{N_{t+1}}p_{t+1i}^\ell  \mathbf{g}_{t+1i} + (P_{t+1}^u - P_{t+1}^\ell  )\sum_{i\in \mathcal{J}_{t+1} }p_{t+1i}^{u-\ell}\mathbf{g}_{t+1i}, \; -(P_{t+1}^u-P_{t+1}^\ell )|\mathcal{J}_{t+1}|  \Bigg],
	\end{equation}
	where $ \mathbf{g}_{t+1i} \in \partial \hat{Q}_{t+1i}(\bar{x}_{t }), i=1,\dots,N_{t+1}  $ and $ \mathcal{J}_{t+1}:=\{j:  \hat{Q}_{t+1i}(\bar{x}_{t }) > \bar{u}_{t}\} $.

	The supporting hyperplane of $ \hat{\Qe}_{t+1}( x_{t }, u_{t} ) $ at $ ( \bar{x}_{t }, \bar{u}_{t} ) $ is constructed as follows.
	\begin{align}
\mathit{h}_{t+1}({x}_{t}, {u}_{t}) := &	\hat{\Qe}_{t+1}(\bar{x}_{t}, \bar{u}_{t}) +  [P_{t+1}^\ell \sum_{i=1}^{N_{t+1}}p_{t+1i}^\ell  \mathbf{g}_{t+1i} + (P_{t+1}^u-P_{t+1}^\ell )\sum_{i=1}^{N_{t+1}}p_{t+1i}^{u-\ell}\mathbf{g}_{t+1i}](x_t - \bar{x}_t) \notag \\
& -(P_{t+1}^u-P_{t+1}^\ell )|\mathcal{J}_{t+1}| (u_t -\bar{u}_t). \label{eqn:supporting-plane}
	\end{align}

Therefore, we can update the cost-to-go functions by adding the new supporting hyperplane \eqref{eqn:supporting-plane}, \ie  $ \mathfrak{Q}_{t+1}(x_{t }, u_{t}) := \max\{\mathfrak{Q}_{t+1}(x_{t }, u_{t}), \mathit{h}_{t+1}({x}_{t}, {u}_{t})  \} $.

\subsection{Algorithm}

\begin{algorithm}[!h]                    
	\caption{SDDP Method for  Risk-Averse Multi-Stage Stochastic Program}          
	\label{alg:SDDP}                           
	\begin{algorithmic}[1]                    
		\REQUIRE  Collect historical data and construct data sets $ \Omega_t $ for each stage scenarios $ \xi_t $ and obtain the corresponding reference distributions $ \P_t^\ell , \P_t^{u-\ell}, t=2,\dots,T $. Initialize $ \mathfrak{Q}_{t}( x_{t-1 }(\xi_{t-1 }), u_{t-1}(\xi_{t-1 }) ) = 0$ for $ t=2,\dots,T $. Let $ \bar{z} = \infty $ and $ k=0 $.
		\STATE Solve the first-stage problem \eqref{formulation:t=1-data-driven-apprx}; let $ (\hat{x}_1, \hat{u}_1) $ be the optimal solution.
		\STATE Calculate the lower bound  $ \underline{z} $ as in problem \eqref{formulation:t=1-data-driven-apprx}; if $ \bar{z} + \frac{z_{\alpha/2}}{\sqrt{M-1}}\sum_{i=1}^{M}(z_i -\bar{z})^2 -\underline{z} \leq \varepsilon \underline{z} $ or $ k>K $, stop; otherwise, go to step 3. \\ 
		\vspace{3 mm}
		\textbf{Forward Simulation}
		\FOR{$ t=2,\cdots, T $ }	
		\FOR{$ i=1,\cdots, M_t $}
		\STATE Sample a $ \xi_{ti} $ from the data set $ \Omega_t $.
		\STATE Solve the optimization problem \eqref{formulation:t-data-driven-apprx} for stage $ t $, sample $ i $.
		\STATE Store the optimal solution as $ (\hat{x}_{ti}, \hat{u}_{ti}) $.
		\ENDFOR
		\ENDFOR  
		\vspace{3 mm}
		\STATE Calculate the upper bound $ \bar{z} = c_1\hat{x}_1 + (1-P_{2}^\ell )\hat{u}_{1 } + \hat{\mathfrak{v}}_2 $, where $ \hat{\mathfrak{v}}_t := \sum_{i=1}^{M_t}\hat{\mathfrak{v}}_{ti}/ M_t, \forall  t=2,\dots,T$,  $\hat{\mathfrak{v}}_{ti} = P_{t+1}^\ell {p_{ti}^\ell (c_{ti}\hat{x}_{ti} + \hat{\mathfrak{v}}_{t+1 i} )}  + (1- P_{t+1}^\ell  )p_{ti}^{u-\ell}\hat{u}_{ti} + (P_{t+1}^u- P_{t+1}^\ell  ){p_{ti}^{u-\ell}[c_{ti}\hat{x}_{ti} + \hat{\mathfrak{v}}_{t+1 i} -\hat{u}_{ti}]^+} $, $ \forall  t=2,\dots,T, \forall i =1,\dots,M_t  $. And $ z_i = c_1\hat{x}_1 + (1-P_{2}^\ell )\hat{u}_{1 } + \hat{\mathfrak{v}}_{2i}, \forall i =1,\dots,M_t $
		\\
		\vspace{3 mm}
		\textbf{Backward Recursion}
		\FOR{$ t=T,T-1, \cdots, 2 $ }
		\FOR{each trial decision $ (\hat{x}_{t-1i}, \hat{u}_{t-1i}), i=1,\dots,M_t $}
		\FOR{each scenario $ \xi_{tj} , j=1,\dots,N_t$}
		\STATE Solve the optimization problem~\eqref{formulation:t-data-driven-apprx} with the approximation  $ \mathfrak{Q}_{t+1}(x_{t }, u_{t}) $ for $ t, \hat{x}_{t-1i}, \allowbreak \hat{u}_{t-1i}, \xi_{tj} $
		\STATE Let $ \mathbf{g}_{t-1j}^i $ be the multiplier associated to the constraints of problem~\eqref{formulation:t-data-driven-apprx} at the optimal solution.
		\ENDFOR
		\STATE  Construct one supporting hyperplane~\eqref{eqn:supporting-plane} of the approximate risk-averse expected future cost function for stage $ t-1 $ and add it to $  \mathfrak{Q}_{t-1i}(\hat{x}_{t-1i}) $.
		\ENDFOR
		\ENDFOR
		\STATE $ k \leftarrow k+1$.
		\STATE Go to step 2.
	\end{algorithmic}
\end{algorithm}

In this section, we adopt SDDP to solve our RMSP. SDDP was first proposed in \cite{pereira1991multi} \jhw{to solve MSP} and later studied in \cite{shapiro2011analysis} \jhw{for MSP models where the objective function is a convex combination of expectation and CVaR}. There are two parts in SDDP, a forward simulation that generates a simulated solution, and a backward recursion that improves the approximation of the cost-to-go functions at each stage.
The detailed algorithm is provided in Algorithm~\ref{alg:SDDP}. We describe the application of SDDP to our RMSP in the following section.

In the backward recursion steps, we solve the optimization problem~\eqref{formulation:t-data-driven} with the approximation  $ \mathfrak{Q}_{t+1}(x_{t }, u_{t}) $ instead of the cost-to-go function $ \hat{\Qe}_{t+1}( x_{t }, u_{t} ) $.
\begin{align}\label{formulation:t-data-driven-apprx}
\hat{Q}_{ti}(\bar{x}_{t-1}) = \inf_{x_{t }, u_{t}} \Big\{ & c_{t }(\xi_{ti})x_{t }(\xi_{ti}) + (1-P_{t+1}^\ell )u_{t}(\xi_{ti}) + \mathfrak{Q}_{t+1}( x_{t }(\xi_{ti}), u_{t}(\xi_{ti}) ): \notag \\
& A_{t}(\xi_{ti})x_{t }(\xi_{ti}) \geq b_{t }(\xi_{ti}) - B_{t }(\xi_{ti})\bar{x}_{t-1} \Big\}, 
\end{align}
for $ i=1,\dots, N_t $, where the corresponding approximated expected cost-to-go functions can be updated by adding supporting hyperplanes \eqref{eqn:supporting-plane}.

At the first stage,
\begin{align}\label{formulation:t=1-data-driven-apprx}
\min_{x_{1}, u_{1}} \qquad & c_{1}x_{1}  +  (1-P_{2}^\ell )u_{1 } + \mathfrak{Q}_{2}( x_{1 }, u_{1} ) \notag \\
s.t. \qquad & A_{1} x_{1}  \geq b_{1},
\end{align}
whose optimal \jhw{objective value} is used as the lower bound of our problem.

The forward simulation steps are performed by sampling independent scenarios from the historical data and computing the corresponding optimal value, which will  be used later to calculate the upper bound $ \bar{z} = c_1\hat{x}_1 + (1-P_{2}^\ell )\hat{u}_{1 } + \hat{\mathfrak{v}}_2 $, 
where $$ \hat{\mathfrak{v}}_t =\dfrac{1}{M_t}  \sum_{i=1}^{M_t} \left\lbrace  P_{t+1}^\ell (c_{ti}\hat{x}_{ti} + \hat{\mathfrak{v}}_{t+1 i})  + (1- P_{t+1}^\ell   )\hat{u}_{ti} + (P_{t+1}^u- P_{t+1}^\ell  ){[c_{ti}\hat{x}_{ti} + \hat{\mathfrak{v}}_{t+1 i} -\hat{u}_{ti}]^+} \right\rbrace, t=2,\dots,T.  $$

\subsection{Convergence Property}
In this subsection, we provide the convergence of the proposed algorithm.

\begin{proposition}
	When the basic optimal solutions are employed in the backward steps, the forward step procedure generates the optimal solution for the risk-averse multistage stochastic program w.p.1 after a sufficiently large number of backward and forward steps of the algorithm.
\end{proposition}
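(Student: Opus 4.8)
The plan is to follow the polyhedral finiteness argument for cutting-plane SDDP schemes (in the spirit of Philpott--Guan and Shapiro). The foundation is that, under Assumption~\ref{assump_bd}, every stage problem in \eqref{formulation:t-data-driven-apprx} is a linear program whose cost-to-go functions $\hat{\Qe}_{t+1}(x_t,u_t)$ are finite-valued, convex, and piecewise linear in $(x_t,u_t)$; this structure is inherited from the LP recourse and from the CVaR reformulation \eqref{eqn:cost-to-go-data-driven}, whose $[\cdot]^+$ terms preserve convexity. First I would verify the two invariants maintained across iterations: (i) validity, i.e.\ $\mathfrak{Q}_{t+1}(\cdot)\le\hat{\Qe}_{t+1}(\cdot)$ everywhere, which holds because each added plane \eqref{eqn:supporting-plane} is a genuine supporting hyperplane built from the subgradient \eqref{eqn:subgradient}; and (ii) monotonicity, i.e.\ since cuts are only added and $\mathfrak{Q}_{t+1}$ is the pointwise maximum of all collected planes, the approximations are nondecreasing in the iteration index $k$ at every point.

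The crux is a finiteness argument. Because the backward recursion employs basic optimal solutions, the multipliers $\mathbf{g}_{t+1i}$ used to form \eqref{eqn:supporting-plane} range over the finitely many vertices of the dual feasible regions of the stage-$t{+}1$ linear programs. Combined with the finitely many possible values of the active index set $\mathcal{J}_{t+1}$ (there are at most $2^{N_{t+1}}$ of them), this shows that only finitely many distinct supporting hyperplanes can ever be generated at each stage. Consequently, the collection of cuts is a nondecreasing sequence of subsets of a fixed finite set, and therefore stabilizes after an almost surely finite number of iterations $K^*$: beyond $K^*$, no backward step produces a genuinely new cut.

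Next I would invoke the sampling assumption. Since forward scenarios are drawn independently from the finite data sets $\Omega_t$ with positive reference probabilities, a Borel--Cantelli argument shows that every node of the finite scenario tree is visited infinitely often with probability~$1$. I would then run a backward induction over $t=T,T-1,\dots,2$ to argue that, once the cuts have stabilized, the approximation is exact at the trial states generated by the forward pass: at stage $T$ the recourse is solved exactly, and if at stage $t{+}1$ the approximation coincides with $\hat{\Qe}_{t+1}$ at the visited trial points, then the cut added at stage $t$ for such a point is tight, forcing $\mathfrak{Q}_{t}=\hat{\Qe}_{t}$ along the trajectory. Propagating this to the first stage equates the lower bound from \eqref{formulation:t=1-data-driven-apprx} with the true optimal value, so the forward solution is optimal and the stopping gap closes w.p.1.

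The hardest part will be making the ``exactness at visited trial points after stabilization'' step rigorous in the presence of the CVaR kink. When $\hat{Q}_{t+1i}(\bar x_t)=\bar u_t$ for some scenario $i$, the term $[\cdot]^+$ is nondifferentiable and the set $\mathcal{J}_{t+1}$ is ambiguous, so I would need to argue that the particular subgradient selection used in \eqref{eqn:subgradient} still yields a supporting hyperplane that is tight at $(\bar x_t,\bar u_t)$, and that ties in the active set do not prevent stabilization. Handling this degenerate case carefully, together with confirming that only finitely many distinct $(\mathbf{g}_{t+1i},\mathcal{J}_{t+1})$ combinations arise from basic solutions, is where the bulk of the technical care is required.
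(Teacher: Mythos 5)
Your proposal is correct in substance, but it takes a genuinely different route from the paper: the paper's entire proof is two sentences, observing that the scenario tree built from the historical data is finite and that the forward-pass samples are generated independently, so that the hypotheses of Proposition~3.1 of Shapiro's 2011 analysis of SDDP (which treats exactly this mean--CVaR risk-averse setting and already encodes the requirement that basic optimal solutions be used in the backward steps) are satisfied, and then citing that result. You have, in effect, re-derived the cited proposition from first principles via the Philpott--Guan/Shapiro finiteness argument: cut validity and monotonicity, finiteness of the cut family, Borel--Cantelli recurrence of sample paths, and backward induction for tightness at visited trial points. What your route buys is self-containedness and an explicit treatment of the CVaR kink in \eqref{eqn:subgradient}; what the paper's route buys is brevity, delegating precisely the technicalities you flag (degenerate active sets $\mathcal{J}_{t+1}$, tightness at trial points) to the reference. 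One caution on your finiteness step: the dual feasible regions of the stage-$(t+1)$ linear programs in \eqref{formulation:t-data-driven-apprx} are \emph{not} fixed --- they change as cuts are appended to $\mathfrak{Q}_{t+2}$ --- so ``finitely many dual vertices'' cannot be asserted for a static polyhedron as your sketch does; it must be established by backward induction from stage $T$ (once the stage-$(t+2)$ cut family ranges over a finite set, only finitely many stage-$(t+1)$ LPs can arise, hence finitely many dual vertices), which is how the cited proofs proceed. Your Borel--Cantelli step is right as stated: independent sampling over finitely many paths with positive reference probabilities yields, by the second Borel--Cantelli lemma, that every path recurs infinitely often w.p.1, which is the sampling condition the paper's citation implicitly verifies.
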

\beginproof
Since the total number of scenarios generated from the historical data is finite and the forward steps generate the sample scenarios independently, the conditions in Proposition 3.1 in \cite{shapiro2011analysis} hold. Thus, our proposition holds. 
\endproof

\section{Risk-averse Hydrothermal Scheduling Problem}\label{sec:compute}
In this section, we investigate a risk-averse hydrothermal scheduling problem with distributional \kz{robustness} by applying our proposed reformulation and algorithm in Sections~\ref{sec:reformulation} -- \ref{sec:sddp}.

The risk-averse hydrothermal scheduling problem is aiming to determine an optimal operational schedule for the hydrothermal system \kz{which} minimize\kz{s} the total expected costs including operation cost, fuel cost, and penalty cost for failing to satisfy the electricity load. The hydrothermal system is a combination of hydroelectric generators and other fuel-costing generators like thermal and nuclear generators. 
Hydroelectric generators utilize stored water in the system reservoirs to provide energy, \jho{whose generation is determined} by the water inflow. 
The inflow is assumed to be the only uncertainty of this problem, which follows some unknown distribution and we only have historical data of the inflow volume at each stage.
The risk-averse hydrothermal scheduling problem can be modeled as a stochastic dynamic program if the inflow volume of the current stage is predicted at the beginning of that stage.

\subsection{Problem Formulation}
To describe the problem, we let $ T $ represent the planning horizon, and 
we denote sets of  thermal generators and reservoirs as $  \mathcal{G} $ and $ \mathcal{R}  $, respectively.
For each thermal generator $ g \in \mathcal{G} $, we label the maximum (minimum) generation as $ \overline{G}^g(\underline{G}^g) $, and denote the unit generation cost as $ c_t^g $ at time $ t $.
For each reservoir $ r\in \mathcal{R} $, we denote the maximum (minimum) reservoir storage as $ \overline{R}^r (\underline{R}^r) $, and denote the inflow to reservoir $ r $ at time $ t$ as $ I^r_t $. For convenience, we use energy units, \ie MWh, as the units of $ \overline{R}^r (\underline{R}^r) $ and $ I^r_t $, which can easily be done by multiplying  constant coefficients of the true storage level and inflow amount, respectively.
Furthermore, we use $ D_t $ to represent the total electricity load, and  use $ c^p_t $ to represent the penalty cost for each unit of unsatisfied load at time $ t $.
 
Next, we define five continuous decision variables using energy units MWh as follows. We define 
$ x^g_t $ as the thermal energy generated by generator $ g\in \mathcal{G} $  at time $ t $, and $ x^p_t $ as the energy amount failed to satisfy at time $ t $. For each reservoir $ r\in \mathcal{R} $, we define  
the hydroelectric energy generation at time $ t $ as $ x^r_t $,
the water spillage as $ s_t^r $, and  the storage level as $ v^r_t $ for reservoir $ r\in \mathcal{R} $ at time $ t $, where the initial storage $ v^r_0 $ is given as a parameter. The generated hydroelectricity energy is a linear function of the water outflow, which can be represented by the outflow volume times a constant. 

Based on the above notation, we provide the formulation as follows.
\begin{align}\slabel{hydro-formulation}
\min_{x_{1}} \qquad  & (\sum_{g\in\mathcal{G}}c_{1}^g x_{1}^g +  c_{1}^p x_{1}^p) +  \sup_{\P_2 \in \mathcal{D}^2}\E_{\xi_2\sim \P_2}\Big[ \min_{ x_2} (c_2^g(\xi_2)x_2^g(\xi_2)  +  c_2^p(\xi_2)x_2^p(\xi_2))  \notag \\
&+ \cdots + \sup_{\P_T \in \mathcal{D}^T}\E_{\xi_T\sim \P_T}[ \min_{ x_T} (c_T^g(\xi_T)x_T^g(\xi_T)  +  c_T^p(\xi_T)x_T^p(\xi_T))   ]  \Big]   \label{eqn:hydro-obj} \\
s.t. \qquad & \sum_{r\in\mathcal{R}}x_t^r + \sum_{g\in\mathcal{G}}x_t^g + x_t^p = D_t, \forall t =1, \dots,T, \label{eqn:hydro-load-balance} \\
& v_t^r + x_t^r + s_t^r = v_{t-1}^r + I_t^r, \forall t =1, \dots,T, \forall r \in \mathcal{R}, \label{eqn:hydro-hydraulic} \\
& \underline{G}^g \leq x_t^g \leq \overline{G}^g, \forall t =1, \dots,T, \forall g \in \mathcal{G}, \label{eqn:hydro-thermal-bounds} \\
& \underline{R}^r \leq v_t^r \leq \overline{R}^r, \forall t =1, \dots,T, \forall r \in \mathcal{R}, \label{eqn:hydro-storage-bounds} \\
& x_t^p, x_t^h, s_t \geq 0, \forall t =1, \dots,T, \label{eqn:hydro-nonnegative}
\end{align}
where the objective function~\eqref{eqn:hydro-obj} is to minimize long-term expected thermal generation cost and penalty cost under the worst-case distribution within the ambiguity set. Constraints~\eqref{eqn:hydro-load-balance} ensure the electricity load balance. Constraints~\eqref{eqn:hydro-hydraulic} restrict on the reservoir water balance, where the inflow amount equals to the sum of storage level difference, spillage and water outflow for hydroelectricity generation. Constraints~\eqref{eqn:hydro-thermal-bounds} and \eqref{eqn:hydro-storage-bounds} represent \jho{capacities of} the thermal generation and the reservoir storage, respectively. Constraints~\eqref{eqn:hydro-nonnegative} are the nonnegative constraints of decision variables.

\subsection{Experiment Settings}

In the following, we perform numerical experiments on the risk-averse hydrothermal scheduling problem with distributional \kz{robustness} by applying our proposed reformulation and algorithm in Sections~\ref{sec:reformulation}-\ref{sec:sddp}.  We randomly generate four cases by assuming the inflow model follows four different classes of true probability distributions, \ie lognormal, truncated normal, Weibull, and exponential distributions. For each case at each stage, we first create and collect $ N $ independent random samples from the true distribution to estimate the empirical distribution $ f_0 $, and then construct a scenario tree with $ S $ scenarios at each stage.
Next, the ambiguity set is constructed as $ \mathcal{D} = \{f\geq 0: f_0^r -d \leq f^r \leq f_0^r+d, \forall r = 1, \dots, R, \sum_{r=1}^{R}f^r =1   \} $, where $ d = \max \{\jho{z_{\alpha/2}} \sqrt{f^r_0(1-f_0^r)}/\sqrt{N}, \forall r = 1,\dots,R \} $ \jho{and $ z_{\alpha/2} $ is the $ \alpha $-level z-score (here we select $ \alpha= 5\% $, and $ z_{\alpha/2} = 1.96 $)}.
Finally, we compare our risk-averse optimal solutions with risk-neutral solutions under perfect information. All experiments were coded in C++ and implemented on a computer node with two AMD Opteron 2378 Quad Core Processors at 2.4GHz and 4GB memory. IBM ILOG CPLEX 12.3 is utilized as the linear programming solver. 

For the parameter setting of Algorithm~\ref{alg:SDDP}, we set the optimality gap as $ 5\% $, the iteration limit $ K $ as $ 300 $, the sample number at each stage $ M_t, \forall t =2,\dots,T $ as $ 6 $. 
For uncertainty set parameter settings,  
we create a scenario tree with $ 12 $ different scenarios at each stage, \ie $ S=12 $. For the hydrothermal scheduling problem  parameter settings, we consider a $ 52 $-week planning horizon for a single reservoir and a single thermal generator. We let the reservoir maximum capacity be $ \overline{R} =  10^6 \mbox{MWh} $ and minimum capacity be $ \underline{R} = 10^5 \mbox{MWh} $. The initial storage is set as $ v_0 = 5.5 \times 10^5 \mbox{MWh} $. The thermal unit generation costs at each stage are randomly generated which vary from $\$ 45 /\mbox{MWh}$ to $\$ 85 /\mbox{MWh} $, and the unit penalty cost is set as $\$ 1000 /\mbox{MWh} $.

\subsection{Computational Results}
\begin{figure}[h]
	\begin{subfigure}{.5\linewidth}
		\centering
		\includegraphics[width=8cm]{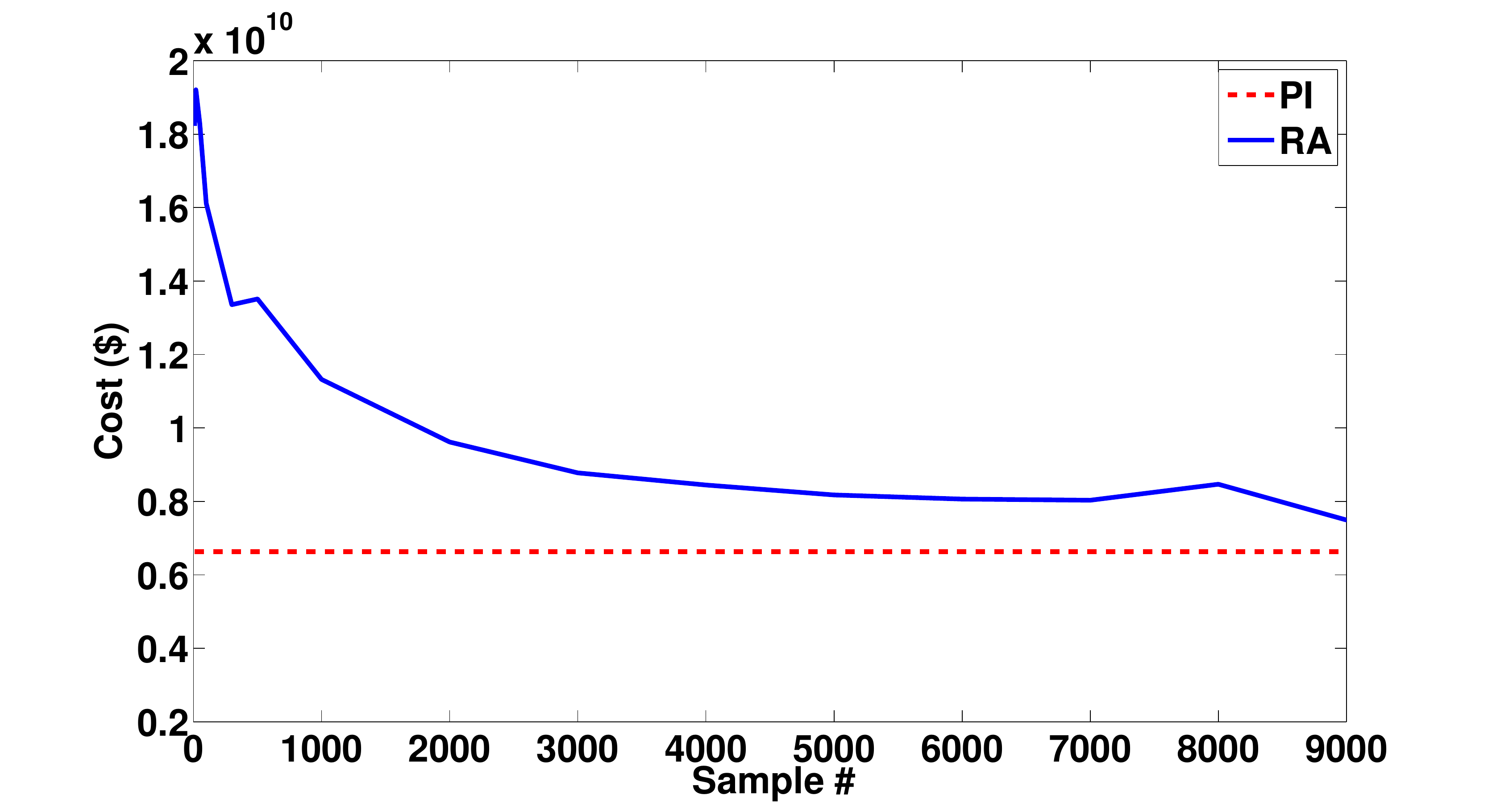}
		\caption{lognormal}
		\label{fig:log-pi-ra}
	\end{subfigure}
	\begin{subfigure}{.5\linewidth}
	\centering
	\includegraphics[width=8cm]{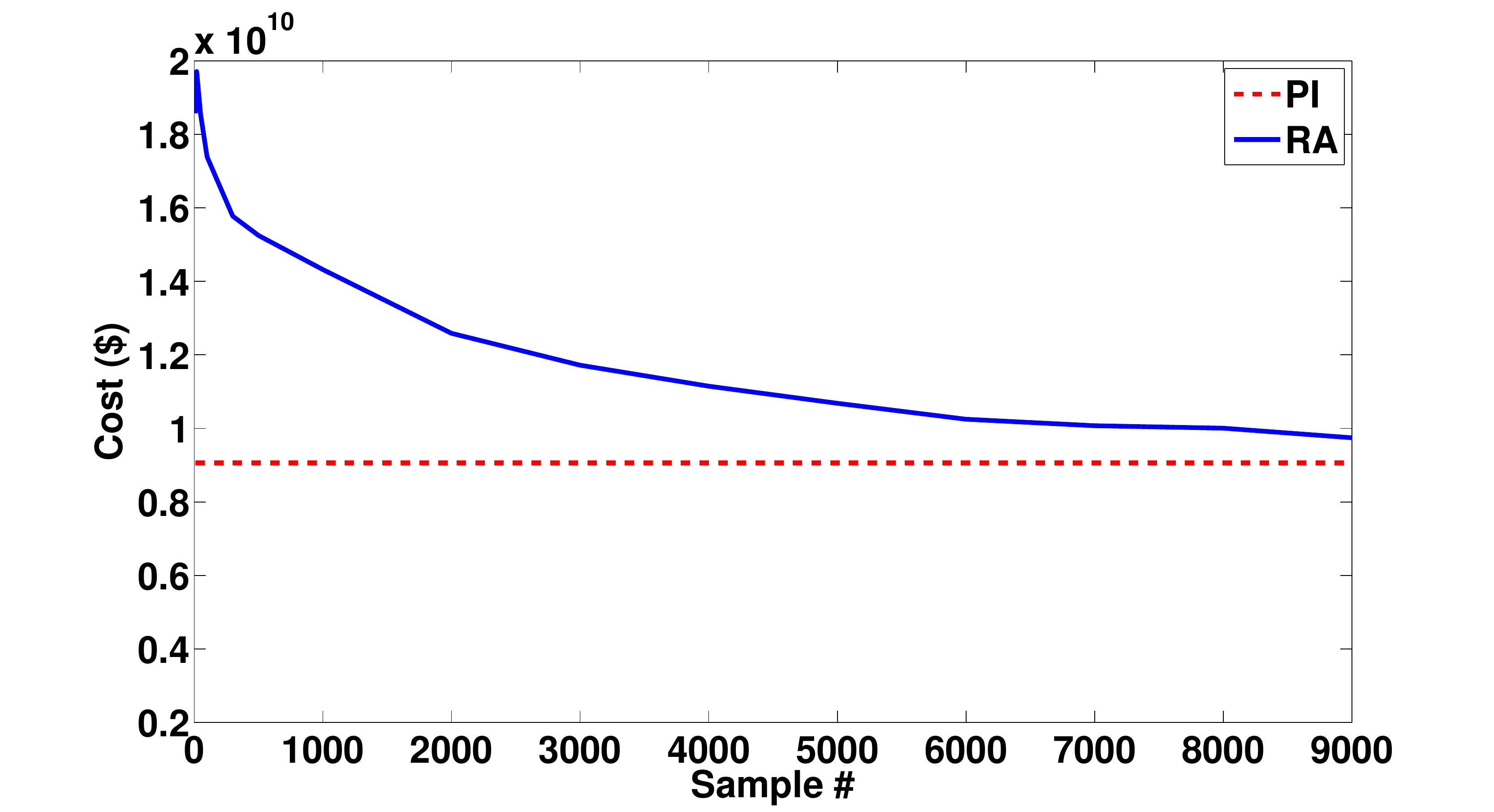}
	\caption{exponential}
	\label{fig:expo-pi-ra}
\end{subfigure}	\\
	\begin{subfigure}{.5\linewidth}
		\centering
		\includegraphics[width=8cm]{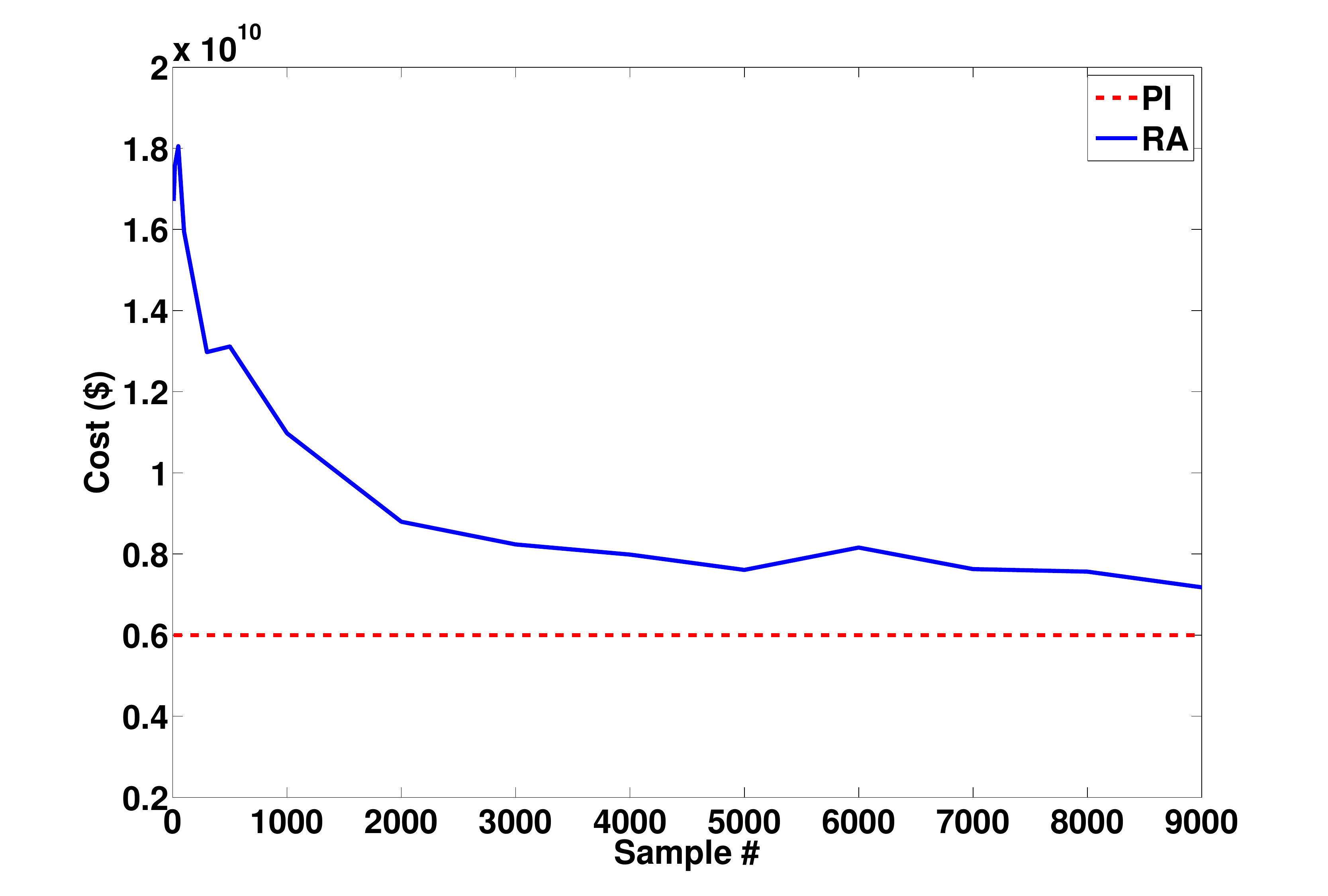}
		\caption{normal}
		\label{fig:normal-pi-ra}
	\end{subfigure}
	\begin{subfigure}{.5\linewidth}
		\centering
		\includegraphics[width=8cm]{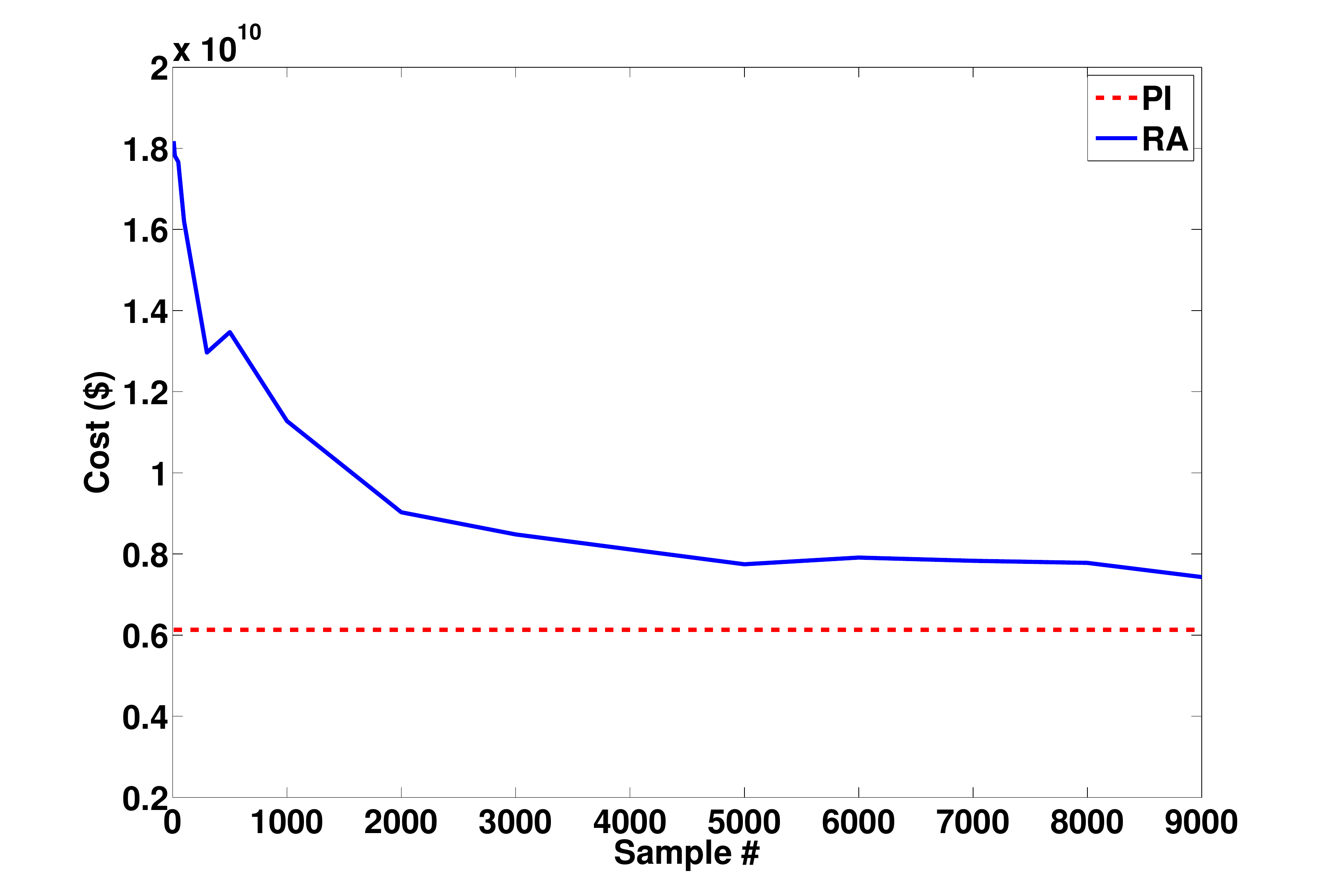}
		\caption{Weibull}
		\label{fig:weibull-pi-ra}
	\end{subfigure}	
	\caption{Comparisons of the risk-averse optimal solution (RA) and the risk-neutral solution with perfect information (PI)}
	\label{fig:sddp-pi-ra}
\end{figure}

\begin{table}[htbp]
	\centering
	\caption{Risk-Averse Solutions Performance}
	\begin{tabular}{c|cc|cc|cc|cc}
		\hline
		Sample & \multicolumn{2}{c|}{normal} & \multicolumn{2}{c|}{exponential} & \multicolumn{2}{c|}{Weibull} & \multicolumn{2}{c}{lognormal} \\
		\hline
		$ N $     & Gap   & Step  & Gap   & Step  & Gap   & Step  & Gap   & Step \\
		\hline
		10    & 1.78  & 300   & 1.05  & 300   & 1.96  & 300   & 1.75  & 300 \\
		20    & 1.93  & 300   & 1.18  & 300   & 1.90  & 300   & 1.90  & 300 \\
		50    & 2.01  & 300   & 1.05  & 300   & 1.88  & 300   & 1.76  & 300 \\
		100   & 1.66  & 300   & 0.92  & 300   & 1.64  & 300   & 1.43  & 300 \\
		300   & 1.16  & 300   & 0.74  & 300   & 1.11  & 300   & 1.01  & 300 \\
		500   & 1.19  & 299   & 0.68  & 300   & 1.20  & 299   & 1.04  & 300 \\
		1000  & 0.83  & 198   & 0.58  & 245   & 0.84  & 198   & 0.71  & 198 \\
		2000  & 0.47  & 120   & 0.39  & 170   & 0.47  & 120   & 0.45  & 130 \\
		3000  & 0.37  & 101   & 0.29  & 141   & 0.38  & 101   & 0.32  & 101 \\
		4000  & 0.33  & 92    & 0.23  & 114   & 0.32  & 92    & 0.27  & 90 \\
		5000  & 0.27  & 85    & 0.18  & 106   & 0.26  & 85    & 0.23  & 85 \\
		6000  & 0.36  & 98    & 0.13  & 84    & 0.29  & 86    & 0.22  & 82 \\
		7000  & 0.27  & 81    & 0.11  & 77    & 0.28  & 86    & 0.21  & 81 \\
		8000  & 0.26  & 84    & 0.10  & 81    & 0.27  & 80    & 0.28  & 91 \\
		9000  & 0.20  & 76    & 0.08  & 74    & 0.21  & 74    & 0.13  & 67 \\
		\hline
	\end{tabular}%
	\label{table:sddp-gap}%
\end{table}%

\begin{figure}[!h]
	\begin{subfigure}{.5\linewidth}
		\centering
		\includegraphics[width=8cm]{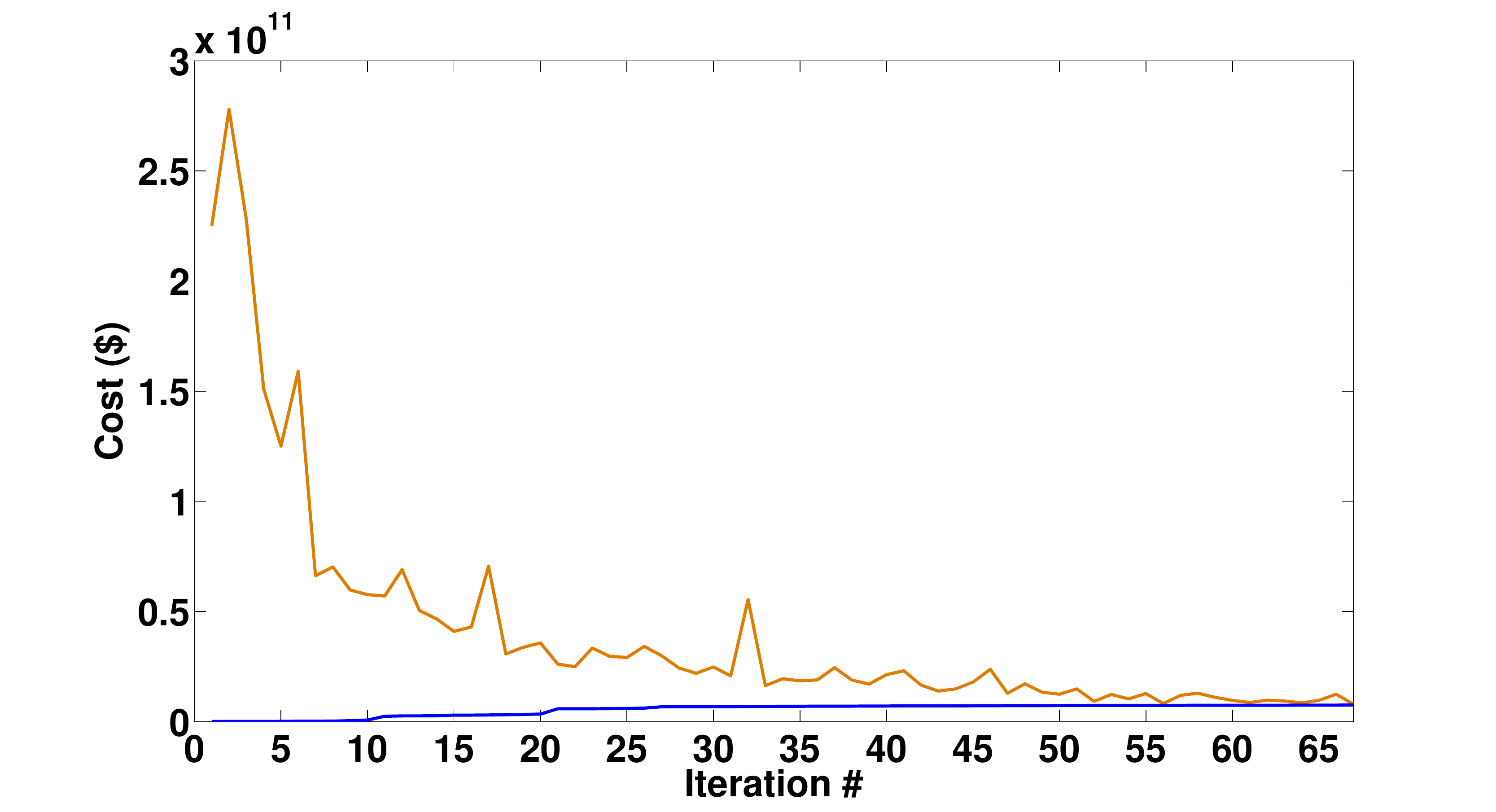}
		\caption{lognormal}
		\label{fig:log-result}
	\end{subfigure}
	\begin{subfigure}{.5\linewidth}
		\centering
		\includegraphics[width=8cm]{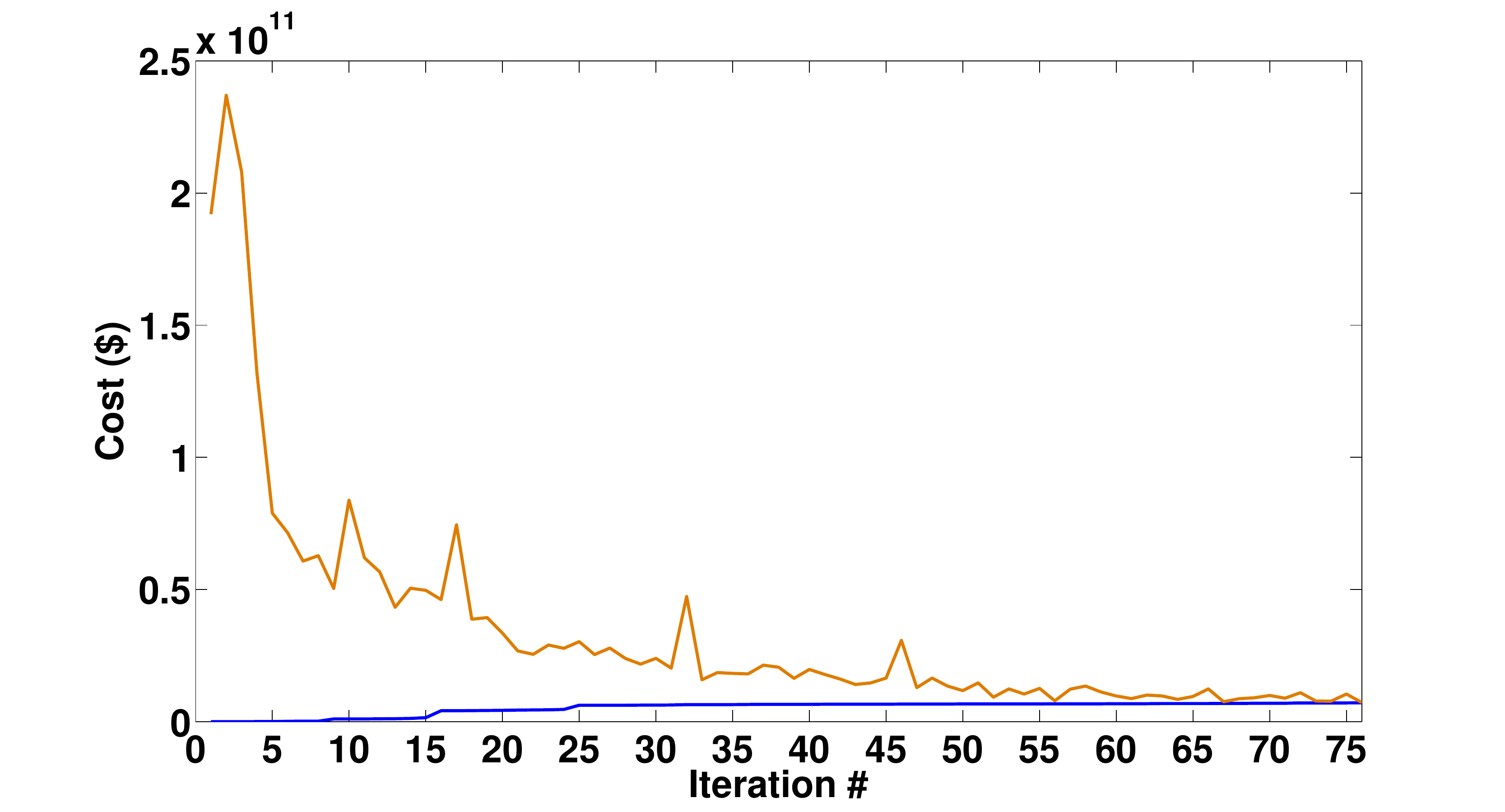}
		\caption{normal}
		\label{fig:normal-result}
	\end{subfigure}\\[1ex]
	\begin{subfigure}{.5\linewidth}
		\centering
		\includegraphics[width=8cm]{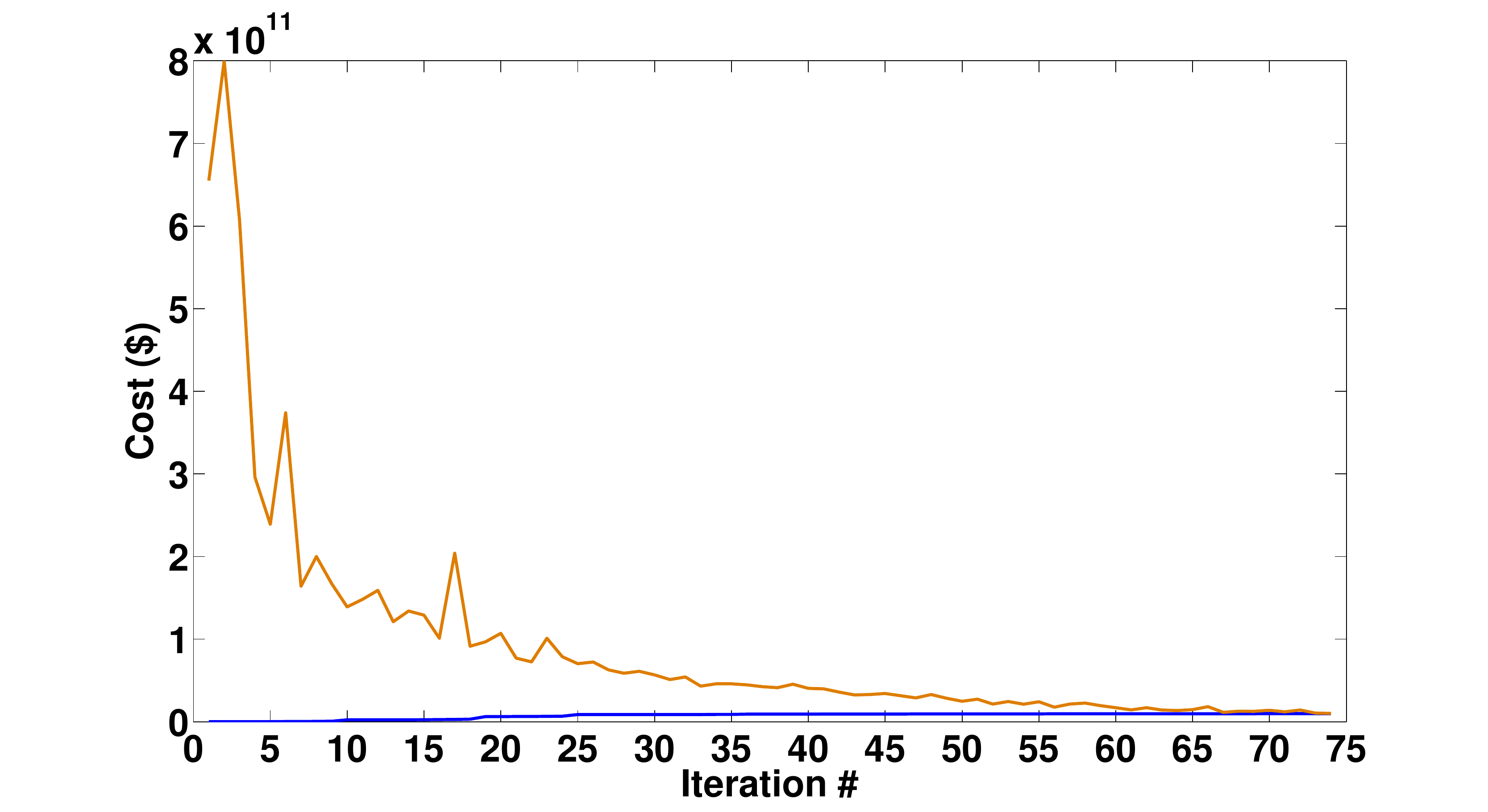}
		\caption{exponential}
		\label{fig:expo-result}
	\end{subfigure}	
	\begin{subfigure}{.5\linewidth}
		\centering
		\includegraphics[width=8cm]{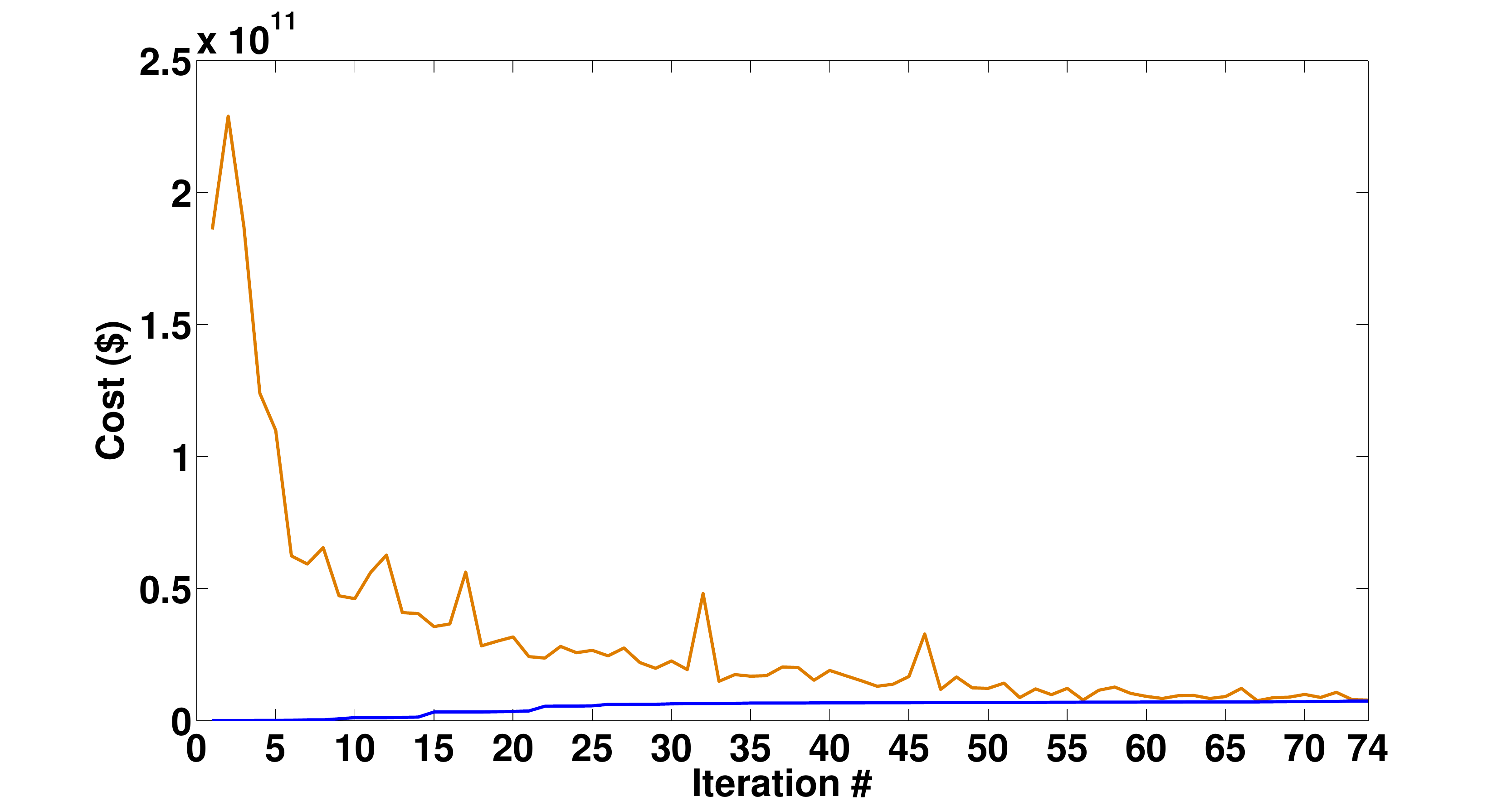}
		\caption{Weibull}
		\label{fig:weibull-result}
	\end{subfigure}	
	\caption{SDDP Convergence Results  with 9000 Data Samples: Iteration - Cost}
	\label{fig:9k-sddp-result}
\end{figure}

\begin{figure}[!h]
	\begin{subfigure}{.5\linewidth}
		\centering
		\includegraphics[width=8cm]{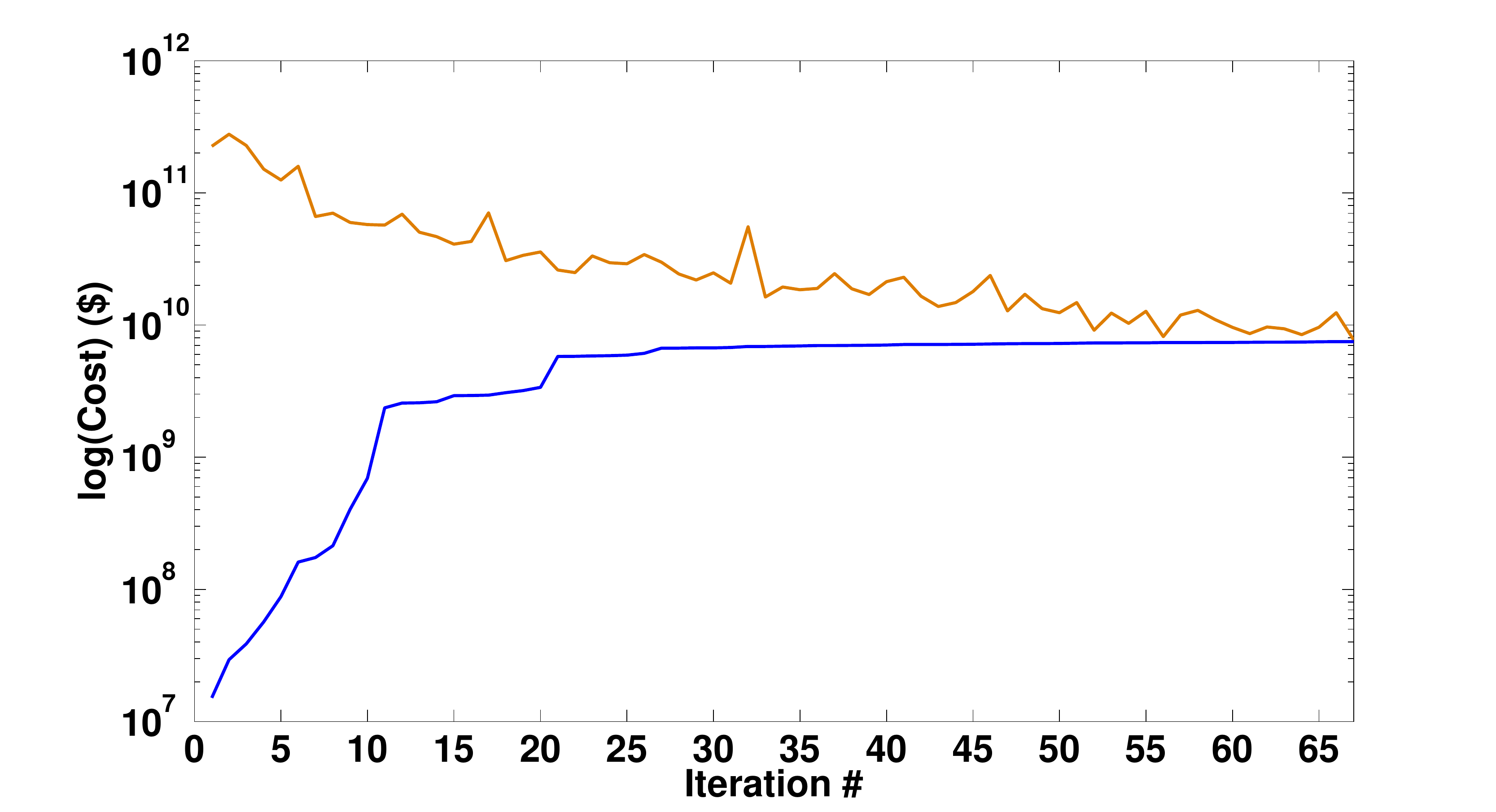}
		\caption{lognormal}
		\label{fig:log-lg-result}
	\end{subfigure}
	\begin{subfigure}{.5\linewidth}
		\centering
		\includegraphics[width=8cm]{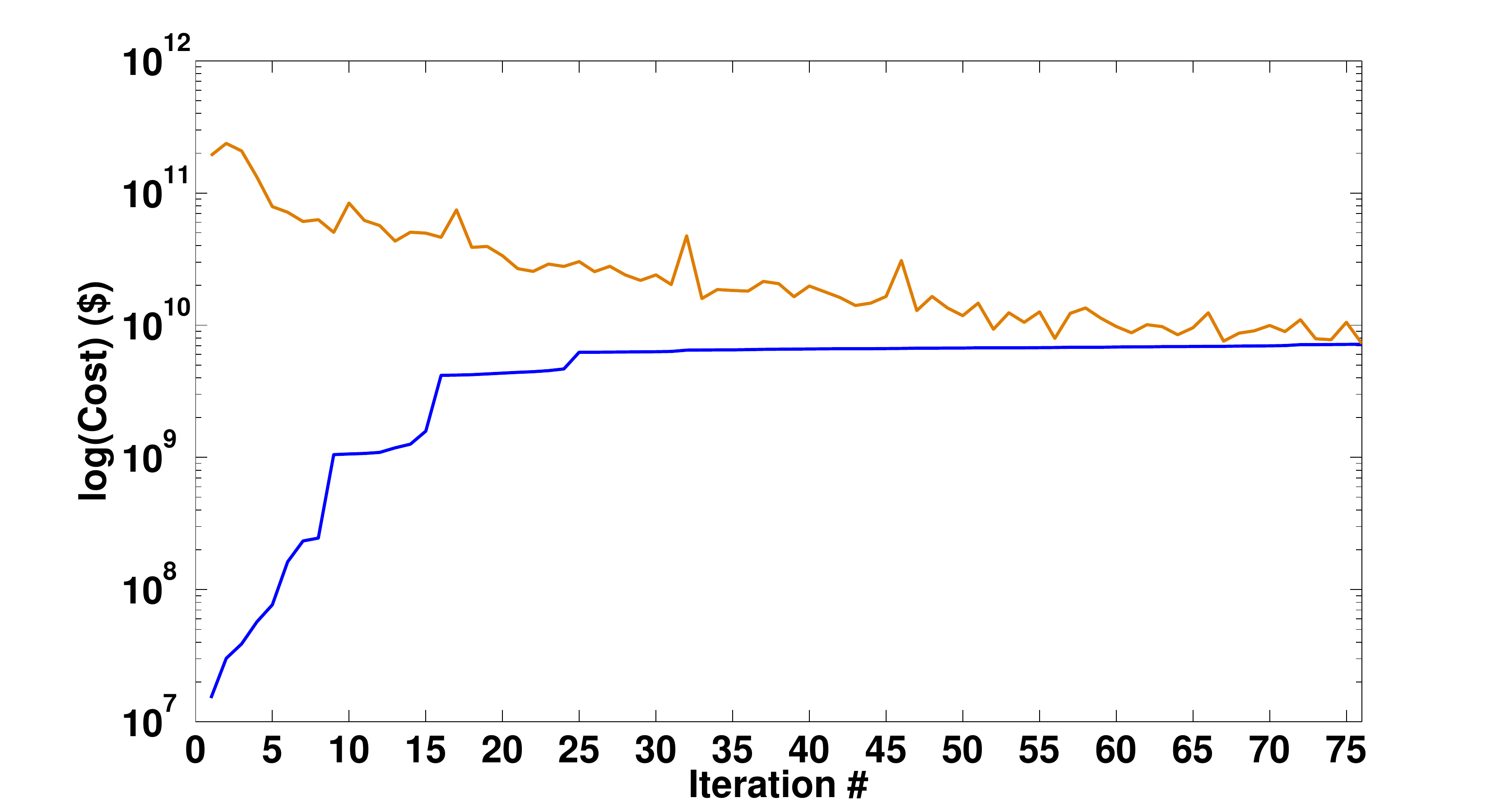}
		\caption{normal}
		\label{fig:normal-lg-result}
	\end{subfigure}\\[1ex]
	\begin{subfigure}{.5\linewidth}
		\centering
		\includegraphics[width=8cm]{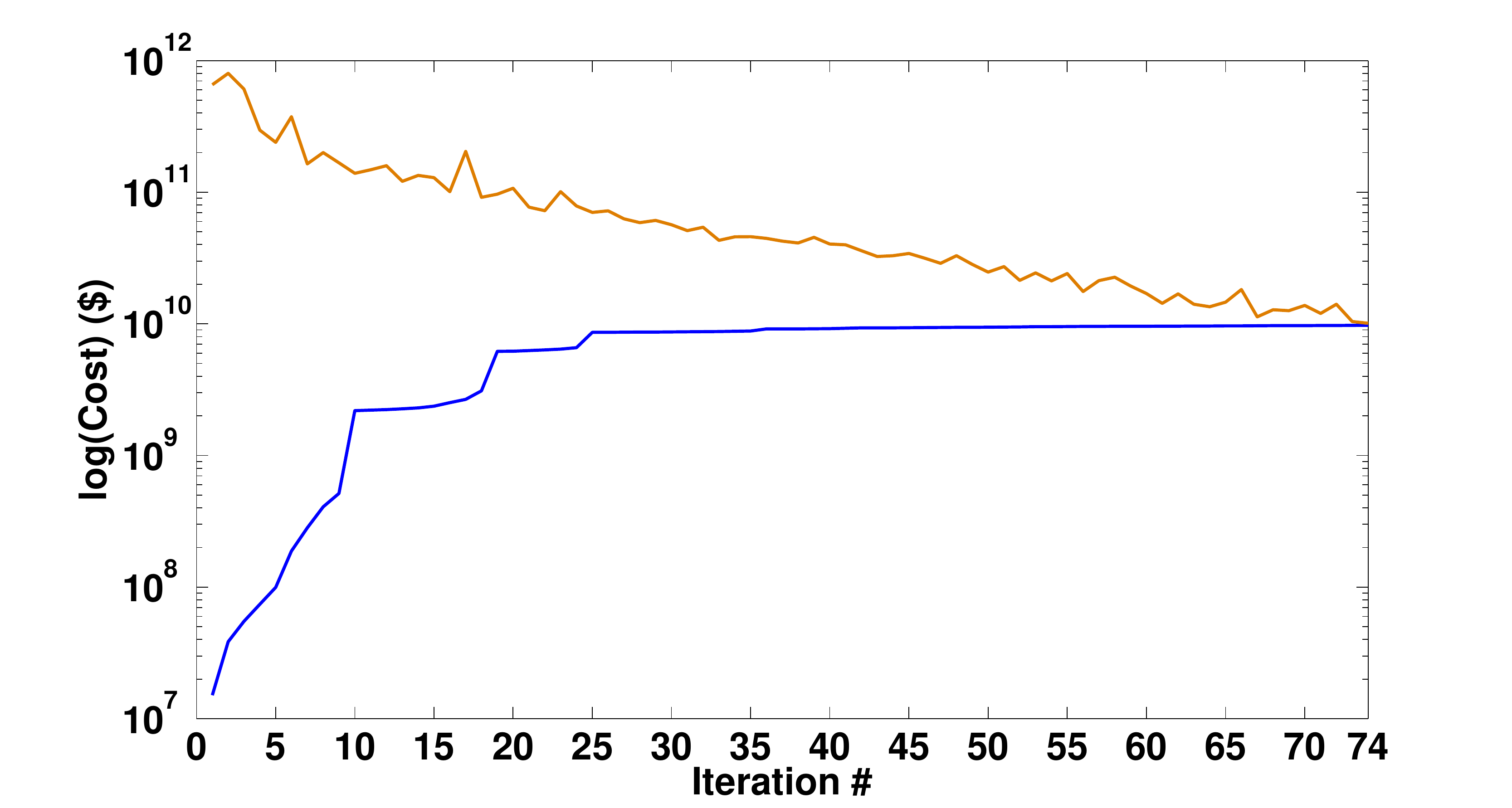}
		\caption{exponential}
		\label{fig:expo-lg-result}
	\end{subfigure}	
	\begin{subfigure}{.5\linewidth}
		\centering
		\includegraphics[width=8cm]{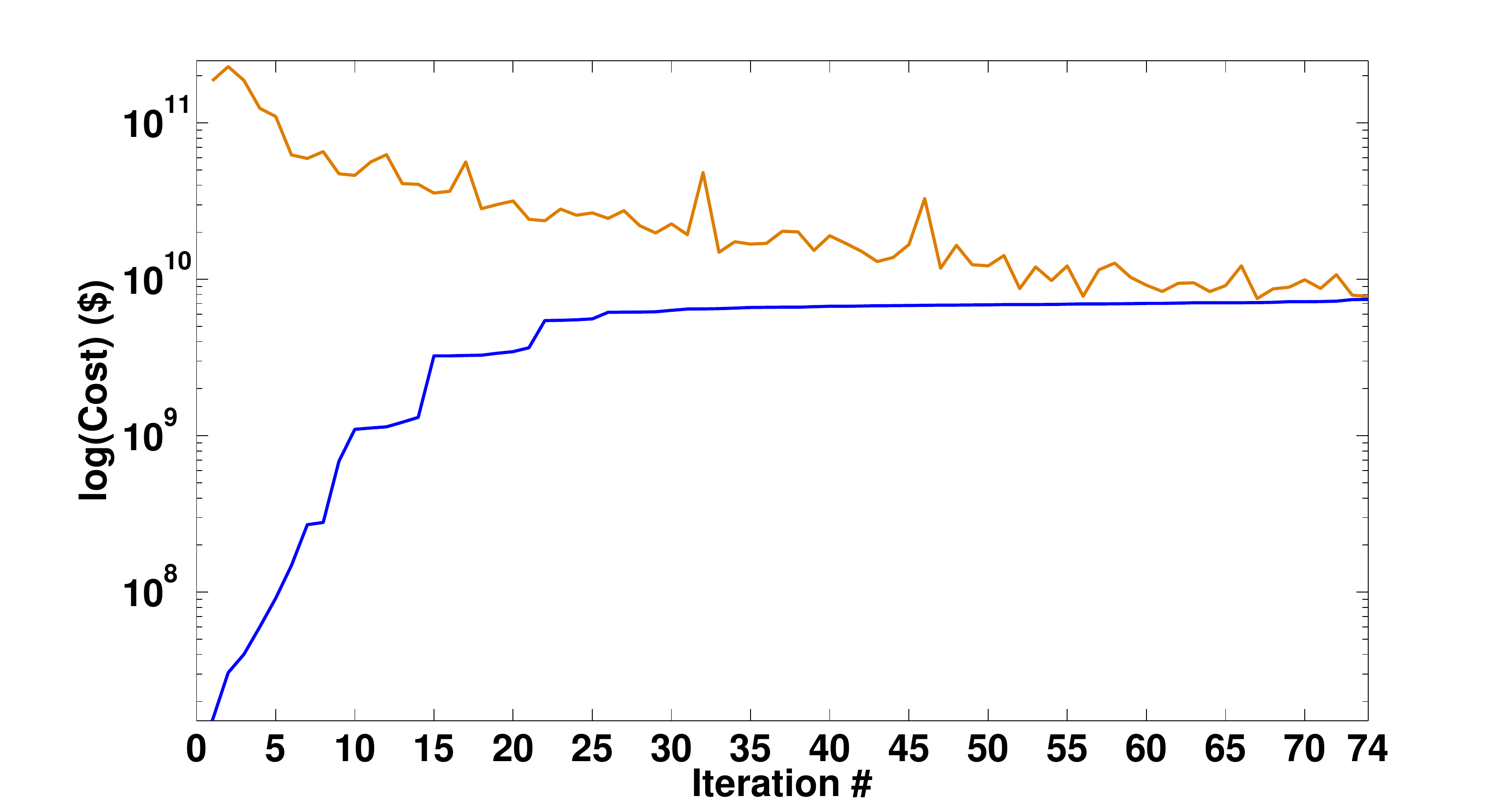}
		\caption{Weibull}
		\label{fig:weibull-lg-result}
	\end{subfigure}	
	\caption{SDDP Convergence Results with 9000 Data Samples: Iteration - log(Cost)}
	\label{fig:9k-sddp-lg-result}
\end{figure}
In the following, we compare our risk-averse solutions and their corresponding risk-neutral solutions with perfect information under various data sample sizes and true distribution settings in Figure~\ref{fig:sddp-pi-ra}. 
As indicated from Figure~\ref{fig:sddp-pi-ra}, our risk-averse solutions converges to their corresponding risk-neutral solution as the number collected data samples increases, and the convergence process evolves moderately quickly after collecting $ 2000 $ data samples, which numerically proves the convergence of our risk-averse multistage stochastic program with distributional ambiguity to the risk-neutral multistage stochastic program.

We present the solution gaps between the risk-averse (RA) objective value and the risk-neutral (PI) objective value, and the algorithm iteration steps under various data sample size and distribution settings  in Table~\ref{table:sddp-gap}. In this table, the column ``Gap" represents the gap between RA and PI, \ie $ \mbox{Gap} = (Z_{a} - Z_{n})/Z_{n} $, where $ Z_{a} $ is the risk-averse solution obtained from Algorithm~\ref{alg:SDDP}, and $ Z_{n} $ is the risk-neutral objective value under the corresponding true distribution. The column ``Step" represents the iteration steps when our algorithm stops, where $ 300 $ means that the corresponding case ceases due to the predefined iteration step limit. 
As indicated from Table~\ref{table:sddp-gap}, the solution gap decreases and the number of required iteration step becomes smaller as the number collected data samples increases, which is coincident with the convergence of the ambiguity set. That is, with more data samples collected, our ambiguity set size shrinks and thus it is faster to solve the risk-averse stochastic program over this ambiguity set.

We provide the algorithm performance under various distribution settings with 9000 data samples in Figure~\ref{fig:9k-sddp-result}. Figures \eqref{fig:log-result} - \eqref{fig:weibull-result} represent the upper bound and lower bound evolving processes under different true distributions lognormal, normal, exponential and Weibull, respectively. The horizontal axis is the number of performed iteration steps and the vertical axis is the total cost. Since the gap between upper bound and lower bound is significant in the first 30 iteration steps in the Figure~\ref{fig:9k-sddp-result}, we provide a closer look at the convergent behavior of Algorithm~\ref{alg:SDDP} in Figure~\ref{fig:9k-sddp-lg-result}, by using the log value of total cost in the vertical axis. The algorithm converges quickly in $ 35 $ steps and terminates with about $ 75 $ steps for each distribution setting.

\section{Conclusion}\label{sec:conclusion}
In this paper, we present an equivalent reformulation of RMSP, where we use a convex combination of expectation and CVaR to replace the worst-case expectation. The reformulation prevents repeating min-max patterns in the multistage program. As the size of collected data samples goes to infinity, we show that RMSP converges to the risk-neutral MSP, where the optimal objective value and the set of optimal solutions of RMSP converge to those of risk-neutral MSP. We adopt the SDDP algorithm to solve the reformulated RMSP and provide the convergence property for the algorithm. To test the RMSP computation performance, we implement numerical experiments for the risk-averse hydrothermal scheduling problem under different true distributions, which demonstrate the convergence of our RMSP to risk-neutral MSP as the collected data increase to infinity.

\baselineskip=12pt
\bibliographystyle{plain}
\bibliography{RAMSP-arxiv1}

\end{document}